\newcounter{mparcnt}
\newtheorem{theorem}{Theorem}[section]
\newtheorem{proposition}[theorem]{Proposition}
\newtheorem{definition}[theorem]{Definition}
\newtheorem{remark}[theorem]{Remark}
\def\Om{\Omega}
\def\p{\partial}
\def\S{{\Sigma}}
\def\<{\langle}
\def\>{\rangle}
\providecommand{\abs}[1]{\lvert#1\rvert}
\newcommand{\mbR}{\mathbb{R}}
\newcommand{\mbS}{\mathbb{S}}
\newcommand{\mcC}{\mathcal{C}}
\newcommand{\mcH}{\mathcal{H}}
\newcommand{\mcK}{\mathcal{K}}
\newcommand{\mcW}{\mathcal{W}}
\newcommand{\rd}{{\rm d}}
\newcommand{\ra}{\rightarrow}
\newcommand{\eq}[1]{\begin{equation}\begin{alignedat}{2} #1 \end{alignedat}\end{equation}}
\numberwithin{equation} {section}
\begin{document}
	
	\title[Heintze-Karcher inequality]{Heintze-Karcher inequality for anisotropic free boundary hypersurfaces in convex domains}

\author[Jia]{Xiaohan Jia}
	\address[X.J]{School of Mathematical Sciences\\
		Xiamen University\\
		361005\\ Xiamen\\ P.R. China}
	\email{jiaxiaohan@xmu.edu.cn}
	
		\author[Wang]{Guofang Wang}
	\address[G.W]{Mathematisches Institut\\
		Universit\"at Freiburg\\
	Ernst-Zermelo-Str.1\\
		79104\\
  \newline\indent Freiburg\\ Germany}
	\email{guofang.wang@math.uni-freiburg.de}
 
\author[Xia]{Chao Xia}
	\address[C.X]{School of Mathematical Sciences\\
		Xiamen University\\
		361005\\ Xiamen\\ P.R. China}
	\email{chaoxia@xmu.edu.cn}
	\author[Zhang]{Xuwen Zhang}
	\address[X.Z]{School of Mathematical Sciences\\
		Xiamen University\\
		361005\\ Xiamen\\ P.R. China
\newline\indent Institut F\"ur Mathematik\\ Goethe-Universit\"at\\ Robert-Mayer-Str.10\\60325\\Frankfurt\\
\newline\indent Germany}
	\email{zhang@math.uni-frankfurt.de}
	
	\thanks{C.X. is supported by the National Natural Science Foundation of China (Grant No. 12271449, 12126102). X.Z. is supported by CSC (No. 202206310078) and the NanQiang excellent Ph.D. student project of Xiamen University.}
	
\begin{abstract}
In this paper, we prove an optimal Heintze-Karcher-type inequality for anisotropic free boundary hypersurfaces in general convex domains. The equality is achieved for anisotropic free boundary Wulff shapes in a convex cone. As applications, we prove various Alexandrov-type theorems.
		
\
		
\noindent {\bf MSC 2020:} 53C45, 53A10, 53C42.\\
{\bf Keywords:} Heintze-Karcher's inequality,  Constant mean curvature, Free boundary surface, Capillary surface, Convex cone.\\
		
\end{abstract}
	
	\maketitle
	
	\medskip
\section{Introduction}
The Heintze-Karcher inequality states that for a bounded domain $\Om$ in $\mbR^{n+1}$ with smooth and mean convex boundary $\S=\p\Omega$, it holds that 
\eq{\label{hk}
\int_\S\frac{1}{H}\rd A
\ge\frac{n+1}{n}\abs{\Omega},
}
and equality in \eqref{hk} holds if and only if $\S$ is a geodesic sphere. Here $H$ is the mean curvature of $\S$ and $\S$ is said to be mean convex if $H>0$. It was first proved by Heintze-Karcher \cite{HK78} and in the present form \eqref{hk} by Ros \cite{Ros87}.
A combination of the Heintze-Karcher inequality \eqref{hk} and the Minkowski-Hsiung formula yields Alexandrov's theorem for embedded closed constant mean  curvature (CMC) hypersurfaces, namely any closed embedded  CMC hypersurface is a sphere, first proved by Alexandrov \cite{Alex62} via moving plane method and by Reilly \cite{Reilly77} and Ros \cite{Ros87} via integral method.
For the history of the study of Heintze-Karcher's inequality and Alexandrov's theorem, we refer to \cite{JWXZ22} and the references therein.
An anisotropic version of the Heintze-Karcher inequality has been proved by He-Li-Ma-Ge \cite{HLMG09}, which   leads  to an Alexandrov-type theorem for embedded closed constant anisotropic mean curvature hypersurfaces. Heintze-Karcher-type inequalities in space forms and more generally, in warped product manifolds, has been proved by Brendle \cite{Brendle13} (see also \cite{LX19}).

Recently, we have studied the Heintze-Karcher-type inequality for hypersurfaces with boundary in the half-space $\mbR^{n+1}_+=\{x\in\mbR^{n+1}| \<x, E_{n+1}\>>0\}$. Precisely, we prove the following result.
\begin{theorem}[\cite{JWXZ22, JWXZ23}] \label{thm1.1} 

Let $\S\subset\overline{\mbR^{n+1}_+}$ be an embedded compact $C^2$-hypersurface with boundary $\p\S$ which intersects $\p\mbR^{n+1}_+$ transversally such that  $\<\nu_F(x), -E_{n+1}\>\le 0$ for any $x\in \p\S$.
Let $\Om$ denote the domain enclosed by $\Sigma$ and $\p \mbR^{n+1}_+$.
Assume the anisotropic mean curvature $H^F$ of $\S$ is positive. Then we have
\eq{\label{ineq-HK-JWXZ}
\int_\S\frac{F(\nu)}{H^F}\rd A
\geq\frac{n+1}{n}\abs{\Om}.
}
Moreover, equality in \eqref{ineq-HK-JWXZ} holds if and only if $\S$ is an anisotropic free boundary Wulff cap.
\end{theorem}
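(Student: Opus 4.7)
My plan is to adapt the Reilly--Ros integral method \cite{Reilly77,Ros87} to the anisotropic free-boundary setting, combining the mixed boundary value problem technique from our earlier work \cite{JWXZ22,JWXZ23} with the anisotropic Reilly-type machinery of \cite{HLMG09}. First, I would introduce the auxiliary potential $u\in C^{2,\alpha}(\overline{\Om})$ solving the mixed boundary value problem
\begin{equation*}
\Delta u = 1\ \text{in}\ \Om,\qquad u = 0\ \text{on}\ \Si,\qquad \<\nabla u, E_{n+1}\> = 0\ \text{on}\ T := \p\Om\cap\p\mbR^{n+1}_+.
\end{equation*}
The transversal intersection of $\Si$ and $\p\mbR^{n+1}_+$ guarantees $C^{2,\alpha}$ regularity up to the corner $\p\Si$ via standard theory for mixed problems, and the divergence theorem combined with the Neumann condition on $T$ gives $\int_\Si u_\nu\,\rd A = |\Om|$.

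Next, I would apply an anisotropic Reilly-type identity to $u$ on $\Om$. The ambient $\mbR^{n+1}$ is Ricci-flat, $u$ vanishes on $\Si$ (so its tangential gradient and Laplacian vanish there), $u_\nu$ vanishes on $T$, and $T\subset\p\mbR^{n+1}_+$ is totally geodesic, so most boundary contributions collapse. The isotropic Reilly identity combined with the Hessian inequality $|\nabla^2 u|^2\ge (\Delta u)^2/(n+1)$ would give merely $\int_\Si Hu_\nu^2\,\rd A\le \tfrac{n}{n+1}|\Om|$. To promote this to the anisotropic bound required by the theorem, one modifies the identity by an extra integration by parts against the Cahn--Hoffman vector field $\nabla F(\nu)$, producing a schematic estimate
\begin{equation*}
\int_\Si \frac{H^F}{F(\nu)}\,u_\nu^2\,\rd A + \int_{\p\Si}(\text{corner term})\,\rd s \le \frac{n}{n+1}|\Om|,
\end{equation*}
in which the corner contribution is proportional to $\<\nu_F,-E_{n+1}\>$ along $\p\Si$ and is therefore non-positive by hypothesis, hence contributes favorably.

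Finally, a Cauchy--Schwarz estimate with the weights $\sqrt{F(\nu)/H^F}$ and $\sqrt{H^F/F(\nu)}$ gives
\begin{equation*}
|\Om|^2 = \left(\int_\Si u_\nu\,\rd A\right)^2 \le \int_\Si \frac{F(\nu)}{H^F}\,\rd A\cdot \int_\Si \frac{H^F}{F(\nu)}\,u_\nu^2\,\rd A \le \frac{n}{n+1}|\Om|\cdot \int_\Si \frac{F(\nu)}{H^F}\,\rd A,
\end{equation*}
which rearranges to the desired inequality. For the equality case, equality in the Hessian inequality forces $\nabla^2 u = \tfrac{1}{n+1}I$ (so $u$ is a translated quadratic), equality in Cauchy--Schwarz forces $u_\nu\propto F(\nu)/H^F$ on $\Si$, and the vanishing of the corner term forces $\<\nu_F,-E_{n+1}\>=0$ along $\p\Si$; combined with the structure of the anisotropic identity, these constraints identify $\Si$ as an anisotropic free-boundary Wulff cap. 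The principal obstacles I anticipate are the precise derivation of the anisotropic Reilly identity delivering the $H^F/F(\nu)$ weight on $\Si$, and the sign analysis of the corner term on $\p\Si$ — this is the place where the half-space geometry and the one-sided capillary hypothesis $\<\nu_F,-E_{n+1}\>\le 0$ interact, and where the argument genuinely departs from both the closed anisotropic case \cite{HLMG09} and the isotropic free-boundary case \cite{JWXZ22}.
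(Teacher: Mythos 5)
Your proposal takes a genuinely different route from the paper (which proves the more general Theorem \ref{Thm-HK} — of which Theorem \ref{thm1.1} is the half-space case — by the Montiel--Ros parallel-hypersurface method: sweep $\Om$ by $\zeta_F(x,t)=x-t\nu_F(x)$, prove surjectivity onto $\Om$ by a first-touching argument with expanding Wulff shapes, where the hypothesis $\<\nu_F,-E_{n+1}\>\le 0$ together with the monotonicity principle of Proposition \ref{Prop-angle-compare} rules out touching at $\p\S$, then conclude by the area formula and AM--GM). Unfortunately your route has a genuine gap, and it sits exactly where you placed the ``principal obstacle'': the anisotropic Reilly-type identity delivering the weight $H^F/F(\nu)$ on $\S$ is not derived, and there are concrete reasons to doubt it exists in the form you sketch. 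First, the Cahn--Hoffman field $\nabla F(\nu)$ is defined only on $\S$ (it depends on the Gauss map), so ``an extra integration by parts against $\nabla F(\nu)$'' over $\Om$ is not a defined operation without choosing an extension, and no natural extension produces $H^F=\mathrm{tr}(A_F(\nu)\circ \rd\nu)$ as a boundary term of the flat Laplacian. Second, your own equality analysis exposes the inconsistency: equality in $|\nabla^2u|^2\ge(\Delta u)^2/(n+1)$ forces $\nabla^2u=\tfrac1{n+1}I$, so $u$ is a round quadratic and its zero level set $\S$ is a piece of a round sphere — but the rigidity statement requires a Wulff cap. Hence the interior term of any correct anisotropic identity cannot be the standard Bochner term $(\Delta u)^2-|\nabla^2u|^2$; the whole identity would have to be rebuilt around an $F$-dependent operator, which is precisely the missing content. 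This is why \cite{HLMG09} and \cite{JWXZ23} prove the anisotropic inequality by parallel hypersurfaces rather than by Reilly's formula.

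Two further points would need repair even if such an identity existed. The $C^{2,\alpha}$ regularity of the mixed problem up to the corner $\p\S$ is not ``standard'': it depends on the opening angle of $\Om$ along $\p\S$, which here is only constrained by the one-sided inequality $\<\nu_F,-E_{n+1}\>\le 0$ (not a constant-angle condition), so the angle can degenerate and the corner regularity, as well as the validity of the divergence theorem giving $\int_\S u_\nu\,\rd A=|\Om|$ with no corner contribution, must be justified. And the claim that the corner term is ``proportional to $\<\nu_F,-E_{n+1}\>$'' with a favorable sign is asserted, not computed; in the isotropic free-boundary case the corner term involves the second fundamental form of $T$ and the contact angle in a specific combination, and the anisotropic analogue is exactly the kind of computation that cannot be waved through. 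If you want an integral-method proof, you would first need to state and prove the anisotropic Reilly identity as a standalone lemma; otherwise the parallel-hypersurface argument of Section \ref{Sec-3} is the reliable path.
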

A hypersurface  $\S$, which intersects $\p\mbR^{n+1}_+$ transversally, is called anisotropic free boundary if  $\<\nu_F(x), -E_{n+1}\>= 0$ for any $x\in \p\S$, where $\nu_F$ is the anisotropic normal of $\S$. A Wulff cap is part of a Wulff shape.
Regarding the anisotropy $F$ and anisotropic quantities with a sub- or superscription $F$, we use the notations in \cite{JWXZ23}.
A brief overview is also provided in Section \ref{Sec-2}.

Using Theorem \ref{thm1.1}, we have proved an Alexandrov-type theorem for anisotropic free boundary or anisotropic capillary hypersurfaces in $\mbR^{n+1}_+$. This Alexandrov-type theorem for isotropic capillary hypersurfaces in $\mbR^{n+1}_+$ was first proved by Wente \cite{Wente80} via moving plane method.

In this paper, we continue the study of Heintze-Karcher-type inequality in a more general setting, in particular, for hypersurfaces with boundary in general convex domains.

Let $\mcK$ be the class of all convex domains in $\mbR^{n+1}$.
We emphasize that in this paper for each $K \in \mcK$, $\p K$ is not assumed to be $C^1$, in other words, $\p K$ may have singularity. 
In order not to make situations too complicated, we restrict us to consider a subset of $\mcK$, denoted by $\mcK_P$, which contains  convex domains with milder singularities, which we will describe. 

We first define the class of convex polytopes with non-empty interior by $\mathcal P$. Each element $P\in {\mathcal P}$ can be  expressed 
\eq{
P=\bigcap_{j\in J} \{u_j \le 0\},
}
where $J$ is a finite index set and $u_j$ $(j\in J)$ are affine linear functions on $\mbR^{n+1}$, i.e., $P$ is determined by a family of inequalities $ u_j\le 0$ $(j\in J)$. 
The family is called {\it irredundant}
if  $P\not = \bigcap_{j\in J, j\not =l} \{u_j \le 0\} $
for any $l\in J$.
We can and will always assume that $P$ is irredundant.
A face of $P$ is defined by $P\cap\bigcap\limits_{i\in I\subseteq J} \{u_i=0\}$.
The faces of dimensions $0, 1,  (n+1)- 2$ and $ (n+1)-1$ are called \textit{vertices}, \textit{edges}, \textit{ridges}, and \textit{facets}, respectively,
see e.g., \cite{Gunter95}.
In this paper we will call ridges
{\it edges}, since our main interested case is $n+1=3$, and in this case the ridges and edges are actually the same.
The (relative) interior of  facets are the smooth part of $\p P$ and the other belongs to
the singular part of $\p P$. Each facet  of $ P$ can be expressed by
$\{u_i=0\} \cap P\eqqcolon f_i$ and each edge (in fact ridge) by
$f_i\cap f_j$ for some $i\neq j\in J$. Let $f_{ij}^0$ denote the relative interior of $f_i\cap f_j$.

Next, we introduce the subclass $ \mcK_P\subset\mcK$, whose elements are called {\it polytope-type convex domains}. For each $ K\in  \mcK_P$, there exists a bi-Lipschitz homeomorphism  $\Psi :P \to  K$ 
from a convex polytope $P= \bigcap _{j\in J} \{u_j\le 0\}$ such that
$\Psi_{|f_j}:  f_j \to \Psi (f_j)$ is smooth 
for any $j\in J$. 
Set $ F_j \coloneqq\Psi (f_j)$ ($j\in J$).
We denote by ${\rm Reg}(\p K)$ to be the set of regular points  in $\p K$ and ${\rm Sing}(\p K)=\p K\setminus {\rm Reg}(\p K)$.   It is clear that
${\rm Reg}(\p K)$ is the union of  the sets of relative interior of $ F_j$. We consider a subset of the singular part of $\p  K$ defined by
\eq{{\rm Sing}_0(\p  K)
\coloneqq\bigcup_{i,j \in J}\Psi (f_{ij}^0).
}
By definition, for any point $x\in{\rm Sing}_0(\p  K) $, there exists two smooth pieces
$ F_i$ and $ F_j$ such that $x $ belongs to the (relative) interior of $ F_i\cap  F_j$. 
For $x\in {\rm Reg}(\p K)$, we denote by $\bar N(x)$ to be the outward unit normal to $\p K$ at $x$. For $x\in{\rm Sing}_0(\p  K)$, we denote by $\bar N_{ij}^1(x)$ and $\bar N_{ij}^2(x)$ be the outward unit normals to $ F_i$ and $ F_j$ at $x$ respectively, if $x\in {\rm int}( F_i\cap  F_j)$.

We shall point out that  the study of these polytope-type domains has become modern interest in geometry.
For example, in \cite{Gromov14,Li21}, polyhedron comparison theorems for 3-manifolds of nonnegative scalar curvature that are modelled from Euclidean cube-type, cone-type, and prism-type polyhedron are proved.
In \cite{EL22}, Allard-type regularity theorem and a partial regularity theorem  are proved to hold for (locally) polyhedral cone domains.
In our recent work \cite{JWXZ22}, we proved non-existence results and  Alexandrov-type theorems for capillary hypersurfaces, when the ambient container is a wedge, namely, interesection of two intesecting half-spaces.
Note that by the definition above, the blow-up limit (tangent cone) of $ K\subset \mathcal{K}_P$ at any $x\in{\rm Sing}_0(\p K)$ is exactly given by a wedge, which becomes the major motivation for us to consider Theorem \ref{Thm-HK-Divisor} below.

\subsection{Main results}
Our first result in this paper is an anisotropic Heintze-Karcher inequality for hypersurfaces with boundary in convex domains, which extends the results for the half-space case in \cite{JWXZ23}.
\begin{theorem}\label{Thm-HK}
Let  $ K\in\mcK$ be a convex domain with boundary $\p  K$ and $\S\subset\overline{ K}$  an embedded compact $C^2$-hypersurface with boundary $\p\S\subset{\rm Reg}(\p K)$ intersecting $\p K$ transversally such that for $x\in\p\S$,
\eq{\label{condi-anisotropic-freebdry}
\langle\nu_F(x),\bar N(x)\rangle\leq0.
} Let $\Om$ denote the domain enclosed by $\Sigma$ and $\p K$.
Assume the anisotropic mean curvature $H^F$ of $\S$ is positive. Then we have
\eq{\label{ineq-HK}
\int_\S\frac{F(\nu)}{H^F}\rd A
\geq\frac{n+1}{n}\abs{\Om}.
}
{
Moreover, equality in \eqref{ineq-HK} holds if and only if $\S$ is an anisotropic free boundary Wulff cap  and
$\p K\cap\overline{\Omega}$ is a part of the boundary of the convex cone $\mcC$ with a smooth sector that is determined by $\S$.
}
\end{theorem}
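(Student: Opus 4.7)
\emph{Overall strategy.} The plan is to extend the anisotropic Reilly-type argument of \cite{JWXZ23} (used to prove Theorem \ref{thm1.1}) from the half-space to a general convex container. The role played there by the totally geodesic hyperplane $\p\mbR^{n+1}_+$ is taken over here by the convexity of $\p K$: the integration-by-parts contribution over $T := \p K \cap \overline\Omega$ carries the anisotropic second fundamental form of $\p K$, which is nonnegative by convexity and may therefore be discarded in the main estimate.

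\emph{Approximation and Reilly identity.} Since $\p\Sigma \subset \text{Reg}(\p K)$ meets $\p K$ transversally, the singular part of $\p K$ lies at positive distance from $\p\Sigma$. I would approximate $K$ from outside by smooth strictly convex $K_\ep \supset K$ that coincide with $K$ in a fixed neighborhood of $\p\Sigma$ (e.g.\ by mollifying the support function of $K$ off this neighborhood); then $\Sigma$ remains admissible in $K_\ep$, condition \eqref{condi-anisotropic-freebdry} is preserved verbatim, and all relevant geometric quantities converge as $\ep \to 0$. Assuming $K$ smooth, solve on $\Omega$ the anisotropic mixed boundary value problem
\eq{
L^F f = 1 \text{ in } \Omega,\quad f = 0 \text{ on } \Sigma,\quad \text{anisotropic Neumann condition on } T,
}
where $L^F$ is the linearized anisotropic Laplacian associated to $F$. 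Applying the anisotropic Reilly identity (cf.\ \cite{HLMG09, JWXZ23}) to $f$: the interior produces $\int_\Omega[(L^F f)^2 - \norm{\na^{2,F} f}^2]\,\rd x$; the $\Sigma$-boundary contribution collapses (since $f=0$) to $\int_\Sigma H^F F(\nu)(f^F_\nu)^2\,\rd A$; the $T$-boundary contribution is nonnegative by convexity of $K$; and the edge integral along $\p\Sigma$ is annihilated by the Neumann condition on $T$ together with \eqref{condi-anisotropic-freebdry}.

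\emph{Conclusion and equality.} Combining the Reilly identity with the pointwise Newton inequality $\norm{\na^{2,F} f}^2 \ge (L^F f)^2/(n+1)$, the divergence identity $\abs{\Omega} = \int_\Sigma F(\nu) f^F_\nu\,\rd A$, and Cauchy--Schwarz yields \eqref{ineq-HK}. Equality forces saturation of Newton, so $\na^{2,F} f = \frac{1}{n+1}\mathrm{Id}$ and hence $f = \tfrac{1}{2(n+1)}(F^0(x-x_0))^2$ up to constants; then $\Sigma = \{f=0\}$ is an anisotropic Wulff cap centered at some point $x_0$. Vanishing of the $T$-integrand, together with convexity, forces each smooth piece of $T$ to lie in a hyperplane through $x_0$, identifying $\p K \cap \overline\Omega$ as a portion of the boundary of a convex cone $\mcC$ with apex $x_0$.

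\emph{Main obstacle.} The delicate point is propagating the equality conclusion through the approximation in $\ep$, because equality in \eqref{ineq-HK} for $K$ does not automatically saturate the inequality for the approximating smooth $K_\ep$. A natural remedy is to perform the Reilly argument directly on $\Omega$ for general convex $K$, interpreting the second fundamental form of $\p K$ as a nonnegative Radon measure and concluding that this measure must vanish on the portion of $T$ where $\na^T f \neq 0$. A secondary technical issue is the regularity of the mixed BVP solution $f$ up to the transversal edge $\p\Sigma$, which is standard for such corner problems but requires verification in the present anisotropic setting.
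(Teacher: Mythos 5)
Your proposal takes a genuinely different route from the paper, and it contains real gaps. The paper does not use a Reilly identity at all: it runs the Montiel--Ros parallel-hypersurface argument, considering the map $\zeta_F(x,t)=x-t\nu_F(x)$ on $Z=\{(x,t):0<t\le 1/\max_i\kappa^F_i(x)\}$, proving $\Omega\subset\zeta_F(Z)$ by expanding Wulff shapes $\mcW_r(y)$ centered at an arbitrary $y\in\Omega$ until first touching, and ruling out a first touching on $\p\S$ by combining the monotonicity principle (Proposition \ref{Prop-angle-compare}) with condition \eqref{condi-anisotropic-freebdry} and convexity of $K$ (which gives $\langle x_y-y,\bar N(x_y)\rangle>0$). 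The inequality then follows from the area formula and AM--GM. This argument requires no regularity of $\p K$ beyond the existence of supporting hyperplanes and no PDE solvability, which is exactly why it handles a general convex $K$ and the one-sided condition \eqref{condi-anisotropic-freebdry}.

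The gaps in your Reilly-based plan are the following. First, the ``anisotropic Reilly identity'' you invoke is not available in the sources you cite: both \cite{HLMG09} and \cite{JWXZ23} prove their Heintze--Karcher inequalities by the parallel-hypersurface method, not by a Reilly formula, so an identity with interior term $\int_\Omega[(L^Ff)^2-\lVert\na^{2,F}f\rVert^2]$ and $\Sigma$-boundary term $\int_\Sigma H^FF(\nu)(f^F_\nu)^2$ would have to be formulated and proved from scratch; it is not a routine anisotropic rewriting of Reilly's formula, since $H^F$ is the trace of $A_F(\nu)\circ\rd\nu$ and does not arise as the boundary trace of any obvious divergence-form operator on $\Omega$. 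Second, and more seriously, the corner term along $\p\S$ does not simply vanish: \eqref{condi-anisotropic-freebdry} is only a one-sided inequality, so $\S$ meets $\p K$ at a variable, generally non-orthogonal angle. For such mixed Dirichlet--Neumann problems the solution generically fails to lie in $H^2$ up to the corner when the opening angle exceeds $\pi/2$, so the integration by parts underlying the Reilly identity is not justified; and even formally the edge integral acquires a favorable sign only after a Hopf-lemma argument ($f_\nu\ge 0$ on $\S$) and an explicit computation that you do not supply. This corner issue is precisely the classical obstruction to Reilly-formula proofs of capillary and free-boundary Alexandrov-type theorems, and it is the reason the paper avoids the PDE approach. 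Your outer approximation of $K$ removes neither difficulty (it also cannot keep $\p K$ fixed near $\p\S$ unless $\p K$ is already $C^2$ there, which is not assumed), and, as you note yourself, it obstructs the equality analysis.
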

Here a convex cone $\mcC=\mcC_\omega\subset \mbR^{n+1}$ is given by 
$$\mcC=x_0+\{tx\in \mbR^{n+1}, t\in [0,\infty), x\in \omega\},
$$
where $\omega\subset\mbS^n$ is a  spherical convex domain, which we call the \textit{sector of $\mcC$}, { and $x_0\in\mbR^n$ is the vertex of $\mcC$}. 
We say that $\omega$ is smooth if $\p \omega$ is smooth in $\mbS^n$.

In the above theorem, the assumption $\p\S\subset{\rm Reg}(\p K)$, which means that $\p\S$ lies in the regular parts of $\p K$, is a technical condition for our proof.
The case when $\p\S$ contains singular points of $\p K$ becomes much more complicated,
as one could easily observe from the proof of Theorem \ref{Thm-HK} that the boundary behavior of $\S$ is essential to show the surjectivity of the flow that produces parallel hypersurfaces.
Motivated by the wedge case tackled in \cite{JWXZ22}, our next result says that in the isotropic case,  the Heintze-Karcher inequality remains true when $\p\S$ has non-empty intersection with ${\rm Sing}_0(\p K)$, provided that $ K\in \mcK_P$.


\begin{theorem}\label{Thm-HK-Divisor}
Let  $ K\in \mcK_P$ be a polytope-type convex domain and $\S\subset\overline{ K}$ be an embedded,
compact, mean convex $C^2$-hypersurface with boundary $\p\S$.
Assume that $\p\S\subset{\rm Reg}(\p K)\cup{\rm Sing}_0(\p K)$ intersects $\p K$ transversally such that
\begin{enumerate}
\item $\langle\nu(x),\bar N(x)\rangle\leq0$ on $\p\S\cap{\rm Reg}(\p K)$,
\item
For $x\in\p\S\cap{\rm Sing}_0(\p K)$, 
\eq{\label{condi-freebdry-Divisor}
\langle\nu(x),\bar N_{ij}^{\alpha}(x)\rangle\leq0, \text{ }\alpha=1,2,
}
when $x\in {\rm int}( F_i\cap  F_j)$.
\end{enumerate}
Then we have
\eq{\label{ineq-HK-Divisor}
\int_\S\frac{1}{H}\rd A
\geq\frac{n+1}{n}\abs{\Om}.
}
Moreover, equality in \eqref{ineq-HK-Divisor} holds if and only if
$\S$ is a  free boundary spherical cap and
$\p K\cap\overline{\Omega}$ is a part of the boundary of the convex cone $\mcC$
whose sector is determined by $\S$.
\end{theorem}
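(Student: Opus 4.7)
Our approach is to mirror the strategy of the proof of Theorem~\ref{Thm-HK}, specialized to the isotropic setting $F(\nu)=\abs{\nu}$, but adapted to allow $\partial\Sigma$ to cross the singular stratum ${\rm Sing}_0(\partial K)$. The central tool is the normal parallel flow $\Phi:\Sigma\times[0,\infty)\to\mbR^{n+1}$, $\Phi(x,t)=x-t\nu(x)$, where $\nu$ is the unit normal of $\Sigma$ pointing out of $\Omega$. For each $x\in\Sigma$ let $\tau(x)$ be the supremum of $t$ such that $s\mapsto\Phi(x,s)$ remains in $\overline{\Omega}$ and $\det(I-sA(x))$ stays positive on $[0,t]$, and set $A_\Sigma\coloneqq\{(x,t):x\in\Sigma,\ 0\le t\le\tau(x)\}$. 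The plan is to establish the surjectivity $\Phi(A_\Sigma)\supseteq\overline{\Omega}$, after which \eqref{ineq-HK-Divisor} follows from the area formula combined with Ros's AM-GM bound $\det(I-tA(x))\le(1-tH(x)/n)^n$ on $[0,1/\lambda_{\max}(x)]$ and the elementary estimate $\tau(x)\le 1/\lambda_{\max}(x)\le n/H(x)$; these give
\eq{
\abs{\Omega}\le\int_\Sigma\int_0^{\tau(x)}\det(I-tA(x))\,dt\,dA\le\int_\Sigma\frac{n}{(n+1)H(x)}\,dA,
}
which is exactly \eqref{ineq-HK-Divisor}.

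The key step, and the main obstacle, is the surjectivity: for $y\in\Omega$ set $r^\ast=\sup\{r:\overline{B_r(y)}\subseteq\overline{\Omega}\}$, pick a tangency point $p\in\partial B_{r^\ast}(y)\cap\partial\Omega$, and aim to force $p\in\overline{\Sigma}$---for then $y=p-r^\ast\nu(p)=\Phi(p,r^\ast)$ with $r^\ast\le 1/\lambda_{\max}(p)$, so $(p,r^\ast)\in A_\Sigma$. As in Theorem~\ref{Thm-HK}, the convexity of $K$ together with the first boundary hypothesis rules out tangency on ${\rm Reg}(\partial K)\cap\overline{\Omega}$ away from $\Sigma$. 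The genuinely new case is $p\in{\rm Sing}_0(\partial K)$: here the smooth convexity of $\partial K$ used in Theorem~\ref{Thm-HK} is unavailable, and the tangent cone of $K$ at $p$ is only a closed wedge $W_p=\{v:\langle v,\bar N_{ij}^\alpha(p)\rangle\le 0,\ \alpha=1,2\}$. Hypothesis \eqref{condi-freebdry-Divisor} says precisely that $-\nu(p)\in W_p$, so the inward flow direction is admissible across the edge; this wedge/dual-cone duality is exactly the extra structural input needed to carry the touching-ball analysis through the singular stratum.

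Finally, the rigidity follows by tracing the two inequalities used above: equality in Ros's AM-GM forces $\Sigma$ to be totally umbilical, hence a spherical cap, while equality in the surjectivity step forces $\partial K\cap\overline{\Omega}$ to coincide with the portion of the tangent cone of $K$ matching the curvature of $\Sigma$. This gives exactly the claimed characterization: $\Sigma$ is a free boundary spherical cap and $\partial K\cap\overline{\Omega}$ is a piece of the boundary of the associated convex cone.
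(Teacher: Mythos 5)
Your skeleton — the parallel flow $\zeta(x,t)=x-t\nu(x)$, surjectivity onto $\Omega$ via touching spheres, the area formula, and the AM--GM bound on $\det(I-tA)$ — is exactly the paper's strategy (the paper runs the regular-boundary cases by simply setting $F(\xi)=\abs{\xi}$ in Theorem \ref{Thm-HK}). The problem is that the one genuinely new step, which is the entire content of the paper's proof of Theorem \ref{Thm-HK-Divisor}, is asserted rather than proved. You must rule out that the touching sphere first meets $\Sigma$ at a point $p\in\p\S\cap{\rm Sing}_0(\p K)$ (or otherwise still produce $y=\zeta(p,r^\ast)$ there), and your only justification is the sentence about ``wedge/dual-cone duality.'' Hypothesis \eqref{condi-freebdry-Divisor} constrains $\nu(p)$ relative to the two face normals $\bar N_{ij}^1,\bar N_{ij}^2$; it does not by itself identify $\nu(p)$ with the sphere normal $(p-y)/r^\ast$, nor does it visibly yield a contradiction. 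The paper's treatment of this case is a substantial piece of wedge geometry: reduction to dimension $3$, a case split according to whether the edge direction $P_1\cap P_2$ lies in $T_x\S$, comparison of the tangent vectors of the curves $\p\S\cap F_i$ and $S_r(y)\cap F_i$ along the edge direction $l$ (the first-touching inequality $\langle T_{\p B_r^i},l\rangle\geq\langle T_{\p\S^i},l\rangle$), and then a contradiction between the angle bounds $\eta^i>\tfrac{\pi}{2}\geq\theta^i$ (from convexity of $K$ and the boundary hypothesis) and the equivalent form $\cos\eta^2+\cos\eta^1\cos\alpha\geq0$, $\cos\eta^1+\cos\eta^2\cos\alpha\geq0$, where $\alpha$ is the dihedral angle. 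None of this is in your proposal, and without some version of it the proof does not close.

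There are also smaller but real defects in the setup. First, the hypothesis $\langle\nu(p),\bar N_{ij}^\alpha(p)\rangle\leq0$ says $\nu(p)\in W_p$, not $-\nu(p)\in W_p$ as you wrote. Second, forcing $p\in\overline{\Sigma}$ is not enough: if $p\in\p\S$, the one-sided tangency of the sphere gives only an inequality between $\nu(p)$ and the sphere normal (this is precisely Case 2 of the proof of Theorem \ref{Thm-HK}), so the identity $y=p-r^\ast\nu(p)$ fails; you need $p$ in the relative interior of $\Sigma$, and the whole difficulty is excluding boundary touching. Third, your $r^\ast=\sup\{r:\overline{B_r(y)}\subseteq\overline{\Omega}\}$ permits the maximal ball to touch the interior of a face of $\p K$ (e.g., a half-ball sitting on a flat face), and convexity does not forbid this; the touching radius should be taken as the distance from $y$ to $\Sigma$ itself, as in the paper. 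The rigidity sketch at the end is consistent with the paper's, but it inherits the gap above since the equality analysis also relies on the surjectivity argument.
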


As consequences of Theorem \ref{Thm-HK} and Theorem \ref{Thm-HK-Divisor},
we can prove the Alexandrov-type theorems for (anisotropic) free boundary hypersurfaces in convex cones with possibly singular sector. 

\begin{theorem}\label{Thm-Alex}
Let $\mcC\subset\mbR^{n+1}$ be a convex cone {with vertex at the origin}. 
Let $\S\subset\overline{\mcC}$ be a compact, embedded, anisotropic free boundary $C^2$-hypersurface with constant anisotropic mean curvature, such that $\p\S\subset{\rm Reg}(\p\mcC)$.
Then $\S$ must be an anisotropic free boundary Wulff cap. 
Moreover,  there are two possible scenarios: 
\begin{enumerate}
\item $\S$ is a Wulff cap centered at the origin, and in this case the sector of $\mcC$ is smooth;
\item $\p\S$ lies on a flat portion of $\p\mcC$.
\end{enumerate}

\end{theorem}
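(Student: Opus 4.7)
The plan is to combine Theorem \ref{Thm-HK} with an anisotropic Minkowski--Hsiung identity for anisotropic free boundary hypersurfaces in cones, in order to force equality in \eqref{ineq-HK}, and then to invoke the rigidity characterization of that equality; a short rigidity argument on convex cones will then split the conclusion into the two listed scenarios.

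Writing $c\coloneqq H^F>0$, the anisotropic free boundary condition $\langle\nu_F,\bar N\rangle=0$ on $\partial\S$ trivially satisfies \eqref{condi-anisotropic-freebdry}, so Theorem \ref{Thm-HK} applies with $K=\mcC$ and yields $\frac{1}{c}\int_\S F(\nu)\,\rd A\geq\frac{n+1}{n}\abs{\Omega}$, where $\Omega$ is the bounded region enclosed by $\S$ together with $\partial\mcC$. Next I would establish the cone analogue of the Minkowski--Hsiung identity
\[
n\int_\S F(\nu)\,\rd A=\int_\S H^F\langle x,\nu\rangle\,\rd A,
\]
which is the standard closed-surface identity plus the vanishing of the boundary contribution along $\partial\S$; this vanishing uses that $\langle x,\bar N\rangle=0$ on every smooth point of $\partial\mcC$ (since $\mcC$ is a cone with vertex at the origin) together with $\langle\nu_F,\bar N\rangle=0$ on $\partial\S$, in the spirit of the half-space argument of \cite{JWXZ23}. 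Combined with the elementary divergence identity $\int_\S\langle x,\nu\rangle\,\rd A=(n+1)\abs{\Omega}$ (whose boundary contribution on $\partial\mcC$ vanishes for the same reason), this gives $\int_\S F(\nu)\,\rd A=\frac{c(n+1)}{n}\abs{\Omega}$, so equality is forced in the Heintze--Karcher inequality above.

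The rigidity statement of Theorem \ref{Thm-HK} then compels $\S$ to be an anisotropic free boundary Wulff cap, and $\partial\mcC\cap\overline{\Omega}$ to be part of the boundary of a convex cone $\mcC'$ with smooth sector, determined by $\S$. To conclude, write $x_0$ for the vertex of $\mcC'$ (equivalently, the centre of the Wulff cap) and compare $\mcC'$ with $\mcC$ on the open piece $\partial\mcC\cap\overline{\Omega}\subset\partial\mcC\cap\partial\mcC'$. A short rigidity lemma for convex cones gives the following dichotomy at each point of this common open piece: either $x_0=0$, so that $\mcC$ and $\mcC'$ share the same vertex, the smoothness of the sector of $\mcC'$ transfers to $\mcC$, and $\S$ is the Wulff cap centered at the origin (scenario (1)); or $x_0\neq 0$, in which case the two distinct conical rulings from $0$ and $x_0$ through each point of the common piece force that piece to be flat, so that $\mcC'$ must be a half-space and $\partial\S$ lies on a flat facet of $\partial\mcC$ (scenario (2)).

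The main obstacle I anticipate is the final cone rigidity step, especially arguing in scenario (1) that smoothness propagates to the \emph{full} sector of $\mcC$ rather than merely to a proper subset, for which one exploits that a Wulff cap centered at the vertex of $\mcC'$ spans the entire sector of $\mcC'$. Careful verification of the Minkowski--Hsiung identity in the anisotropic free boundary cone setting, though structurally parallel to \cite{JWXZ23}, is the other technical point that has to be checked.
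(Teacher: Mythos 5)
Your proposal follows essentially the same route as the paper: apply Theorem \ref{Thm-HK}, derive an anisotropic Minkowski-type formula whose boundary term vanishes because $\mu_F\parallel\bar N$ and $\langle x,\bar N(x)\rangle=0$ on a cone with vertex at the origin, force equality in \eqref{ineq-HK}, and then run a rigidity argument comparing the cone $\mcC'$ produced by the equality case with $\mcC$ (the paper's version of your final dichotomy is: if the two cones coincide one gets scenario (1); if not, the vertex of $\mcC'$ is a distinct point on the ray $\{tO\tilde O\}\subset\p\mcC$ lying in the interior of $\p\mcC\cap\overline\Omega$, so $\p\mcC'$ contains a full line and, since $\p\S\subset{\rm Reg}(\p\mcC)$, the cone $\mcC'$ is a half-space, giving scenario (2)).

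There is one genuine gap: you write ``$c\coloneqq H^F>0$'' without justification. The hypothesis is only that the anisotropic mean curvature is \emph{constant}; positivity is needed both to apply Theorem \ref{Thm-HK} and to divide by $H^F$ in the Minkowski identity, and it does not come for free. The paper closes this by an elliptic-point argument (Proposition \ref{Prop-elliptic-point}): shrink the family of Wulff shapes $\mcW_r(O)$ centered at the vertex until the first exterior touching of $\S$ at some $x_0$; if $x_0\in\p\S$ the free boundary condition together with $\langle x_0,\bar N(x_0)\rangle=0$ forces $\mcW_{r_0}(O)$ and $\S$ to be tangent there, so in all cases the anisotropic principal curvatures at $x_0$ are at least $1/r_0$, whence the constant $H^F$ is positive. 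You should insert this step (or an equivalent one) before invoking Theorem \ref{Thm-HK}. Apart from that omission, the argument is sound and matches the paper's.
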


\begin{theorem}\label{Thm-Alex-Divisor}
Let $\mcC\in \mcK_P$ be a polytope-type convex cone {with vertex at the origin}.
Let $\S\subset\overline{\mcC}$ be a compact, embedded, free boundary $C^2$-hypersurface with constant mean curvature, such that  $\p\S\subset{\rm Reg}(\p\mcC)\cup{\rm Sing}_0(\p\mcC)$. 
Then $\S$ must be a free boundary spherical cap.
Moreover, there are two possible scenarios: 
\begin{enumerate}
\item $\S$ is a free boundary spherical cap centered at the origin;
\item $\p\S$ lies on a flat portion or a wedge portion of $\p\mcC$.
\end{enumerate}

\end{theorem}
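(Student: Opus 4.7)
The plan is to combine Theorem \ref{Thm-HK-Divisor} with a Minkowski-Hsiung identity adapted to cones. The geometric input driving the argument is that, since $\mcC$ has vertex at the origin, every facet of $\p\mcC$ lies in a hyperplane through the origin and every edge passes through the origin; consequently the position vector $x$ is tangent to $\p\mcC$ at all regular points and along every edge. This forces the relevant boundary terms to vanish.

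First, Theorem \ref{Thm-HK-Divisor} combined with the fact that $H$ is a positive constant gives $\tfrac{|\Sigma|}{H}\geq \tfrac{n+1}{n}|\Omega|$. Next, applying the divergence theorem on $\Omega$ to the position vector field yields
\[
(n+1)|\Omega| = \int_\Sigma \langle x,\nu\rangle\,\rd A + \int_{\p\Omega\cap\p\mcC}\langle x,\bar N\rangle\,\rd A,
\]
and the last integral vanishes facet-by-facet, since $\langle x,\bar N\rangle\equiv 0$ on any hyperplane through the origin. Decomposing $x = x^T + \langle x,\nu\rangle\nu$ and computing shows $\mathrm{div}_\Sigma(x^T) = n - H\langle x,\nu\rangle$; integration over $\Sigma$ via the divergence theorem then produces
\[
n|\Sigma| - H\int_\Sigma\langle x,\nu\rangle\,\rd A = \int_{\p\Sigma}\langle x,\mu\rangle\,\rd s,
\]
where $\mu$ is the outward conormal of $\p\Sigma$ in $\Sigma$.

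The main obstacle is showing the right-hand side of the last identity is zero, which requires a case analysis on $\p\Sigma$. On $\p\Sigma\cap\mathrm{Reg}(\p\mcC)$ the free boundary condition forces $\mu=\bar N$, and $\langle x,\bar N\rangle = 0$ since $x$ is tangent to the facet. On $\p\Sigma\cap\mathrm{Sing}_0(\p\mcC)$ the free boundary condition $\langle\nu,\bar N^\alpha_{ij}\rangle=0$ for $\alpha=1,2$ confines $\nu$ to the tangent space of the edge $F_i\cap F_j$; either $\p\Sigma$ contains an open subset of that edge, in which case $T\p\Sigma$ coincides with the edge's direction and hence contains $x$ (forcing $\mu\perp x$), or else $\p\Sigma$ crosses the edge in a lower-dimensional set which contributes nothing to the integral. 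Hence the right-hand side vanishes, and combining the three identities yields $\tfrac{|\Sigma|}{H} = \tfrac{n+1}{n}|\Omega|$, saturating the Heintze-Karcher inequality.

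Finally, invoking the equality case of Theorem \ref{Thm-HK-Divisor}, $\Sigma$ is a free boundary spherical cap with some center $x_c$, and $\p\mcC\cap\overline{\Omega}$ is part of the boundary of a convex cone with vertex $x_c$. If $x_c=0$, this cone coincides with a portion of $\mcC$ itself, and we are in case (1). If $x_c\neq 0$, then $\p\mcC\cap\overline{\Omega}$ is simultaneously part of a cone with vertex $0$ and of a cone with vertex $x_c$; as the two vertices differ, this portion of $\p\mcC$ must be either flat (a single facet, with $x_c$ lying on it) or consist of two facets meeting along an edge through $x_c$, yielding case (2).
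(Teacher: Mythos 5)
Your argument follows essentially the same route as the paper's: the tangential field $x^{T}$ gives the Minkowski-type identity $n|\Sigma|-H\int_{\Sigma}\langle x,\nu\rangle\,\rd A=\int_{\partial\Sigma}\langle x,\mu\rangle\,\rd s$ (this is Proposition \ref{Prop-Minko} with $F\equiv 1$), the divergence theorem on $\Omega$ together with $\langle x,\bar N\rangle=0$ on a cone with vertex at the origin gives $(n+1)|\Omega|=\int_{\Sigma}\langle x,\nu\rangle\,\rd A$, and combining these saturates the Heintze--Karcher inequality of Theorem \ref{Thm-HK-Divisor}; the final two-vertex discussion (the ray from $O$ through the cap's center forces a line in the boundary of the auxiliary cone, hence a half-space or a wedge) is also the paper's argument. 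Your treatment of the edge contribution to $\int_{\partial\Sigma}\langle x,\mu\rangle$ is acceptable; the paper disposes of it slightly more directly by observing that the free boundary condition at an edge point forces $\nu\in T_x(F_i\cap F_j)$, so $\Sigma$ meets the edges transversally and $\partial\Sigma\cap{\rm Sing}_0(\partial\mcC)$ has $\mcH^{n-1}$-measure zero.

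There is, however, one genuine gap: you invoke ``the fact that $H$ is a positive constant,'' but the hypothesis only says that $H$ is constant, and Theorem \ref{Thm-HK-Divisor} requires mean convexity, so positivity must be \emph{proved} before anything else in your chain can start. The paper does this in Proposition \ref{Prop-elliptic-point}: shrink a large sphere centered at the origin until it first touches $\Sigma$ from outside; at a first touching point in $\mathring{\Sigma}$ the two hypersurfaces are tangent, at a first touching point in $\partial\Sigma\cap{\rm Reg}(\partial\mcC)$ the free boundary condition together with $\langle x_0,\bar N(x_0)\rangle=0$ again forces tangency, and the case of a first touching point on ${\rm Sing}_0(\partial\mcC)$ needs the separate wedge-type analysis quoted from \cite[Proposition 5.3]{JWXZ22}. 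This produces an elliptic point, whence the constant $H$ is positive. Without this step (or some substitute ruling out $H\le 0$), your proof does not get off the ground; with it, the rest of your argument is sound and matches the paper's.
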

\begin{remark}
\normalfont
In the case that $\mcC\subset\mbR^{n+1}$ is a convex cone with smooth sector,
Theorem \ref{Thm-Alex-Divisor} has been proved by Choe-Park \cite{CP11} via Reilly's formula (see also Pacella-Tralli \cite{PT20}).
In the case that $\mcC$ is a wedge, Theorem \ref{Thm-Alex-Divisor} has been proved by the authors in \cite{JWXZ22}. 

\end{remark}

By virtue of the above results, 
it is natural to raise the following question:
are Theorem \ref{Thm-HK-Divisor} and Theorem \ref{Thm-Alex-Divisor} true in the anisotropic case?

\subsection{Further discussions: on various Heintze-Karcher-type inequalities}

Theorem \ref{Thm-HK} includes, or in a better way to say, unifies many previous Heintze-Karcher-type inequalities.
First of all, it is obvious that all closed surfaces can be viewed as a free boundary hypersurface  (in fact, with empty boundary), and hence Theorem \ref{Thm-HK} can be applied to the closed case.

\subsubsection{Capillary hypersurfaces in the half-space} Let $ K$ be the upper half-space $\mathbb{R}^{n+1}_+$ with boundary $\partial \mathbb{R}^{n+1}_+$. Let $\Sigma$ be a hypersurface in $\mathbb{R}^{n+1}_+$ supported on $\partial\mathbb{R}^{n+1}_+$.
If $\Sigma$ is a usual capillary hypersurface, i.e., it intersects the support, $\p\mathbb{R}^{n+1}_+$, at a constant angle $\theta$, we can view it as an anisotropic free boundary  hypersurface with respect to a special $F$ intentionally defined as
\[
F(\xi)
=\abs{\xi}-\cos\theta \langle \xi,E_{n+1}\rangle,
\]
where $E_{n+1}$ is the $(n+1)$-$th$ unit vector. In fact, it is easy to see that in this case $\nu_F=\nu-\cos \theta E_{n+1},$ and hence
$
\langle \nu_F, -E_{n+1}\rangle=\langle \nu, -E_{n+1}\rangle +\cos\theta=0.$
Another interesting observation is that for this $F$ the anisotropic mean curvature equals to the usual one, i.e.,
$H_F=H.$
Hence applying Theorem \ref{Thm-HK} we obtain
\[
\int_\S \frac {1-\cos\theta \langle \nu, E_{n+1}\rangle} H \rd A\ge \frac 1 {n+1} |\Omega|,
\]
which was proved in our previous paper \cite{JWXZ22}. This observation has been given in \cite{LXZ23}.

\subsubsection{Free boundary hypersurfaces in the unit ball}
Let $ K$ be the unit  ball $\mathbb{B}^{n+1}$. If $\S$ is a free boundary hypersurface in $\overline{\mathbb{B}^{n+1}}$ supported on $\mathbb{S}^n$. Then from Theorem \ref{Thm-HK} we have a geometric inequality
\eq{\label{eq3z}
\int_\Sigma \frac 1 H \rd A 
\ge \frac {n+1} n |\Omega|.
}
\eqref{eq3z} could not be improved in this form, but is not an optimal inequality, since equality never holds again by Theorem \ref{Thm-HK}. An optimal version of Heintze-Karcher inequality with free boundary was established in \cite[Theorem 5.2]{WX19}.
It also leaves an interesting question:
Whether one can establish an optimal Heintze-Karcher inequality for capillary hypersurfaces in $\mathbb{B}^{n+1}$?

\

The rest of this paper is organized as follows. In Section \ref{Sec-2}, we review the notions involving anisotropy and also  anisotropic free boundary or capillary hypersufaces. In Section \ref{Sec-3}, we prove the Heintze-Karcher-type inequalities in Theorems \ref{Thm-HK} and \ref{Thm-HK-Divisor}.
In Section \ref{Sec-4}, we prove Alexandrov-type theorems in Theorems \ref{Thm-Alex} and \ref{Thm-Alex-Divisor}.

\

\section{Preliminaries}\label{Sec-2}
Let $F: \mathbb{S}^n\to \mathbb{R}_+$ be a $C^2$ positive function on $\mathbb{S}^n$ such that 
\begin{equation}\label{convexity condition}
(\nabla^2 F+F \sigma)>0,
\end{equation} where $\sigma$ is the canonical metric on $\mathbb{S}^n$ and $\nabla^2$ is the Hessian on $\mathbb{S}^n$.
One can extend $F$ as a positively
homogeoneous one  function on $\mathbb{R}^{n+1}$, namely, $F(\xi)=|\xi|F(\frac{\xi}{|\xi|})$ for $\xi\neq 0$ and $F(0)=0$. \eqref{convexity condition} is then equivalent to say that $\frac12F^2$ is uniformly convex in the sense $$D^2(\frac12F^2)(\xi)>0\hbox{ for }\xi\neq 0.$$
Here $D^2$ is the Euclidean Hessian.
We denote
\eq{
A_F=\nabla^2 F+F\sigma.
}
Let $\Phi:\mathbb{S}^n\to\mathbb{R}^{n+1}$ be the Cahn-Hoffman map associated with $F$  given by
\begin{eqnarray*}
 \Phi(x)
=\nabla F(x)+F(x)x,
\end{eqnarray*}
where $\nabla$ denote the gradient on $\mbS^n$. One easily sees that  the image $\mathcal{W}_F=\Phi(\mbS^n)$  is a strictly convex, closed hypersurface in $\mbR^{n+1}$, which is called the unit Wulff shape with respect to $F$.

Let $F^o:\mbR^{n+1}\ra\mbR$ be defined by
\eq{
    F^o(x)=\sup\left\{\frac{\langle x,z\rangle}{F(z)}\Big| \, z\in\mbS^n\right\},
}
where $\langle\cdot,\cdot\rangle$
denotes the standard Euclidean inner product. It is  a well-known fact (see e.g., \cite{HLMG09}) that the unit
Wulff shape $\mcW_F$ can be interpreted by $F^o$ as
\begin{align*}
\mcW_F=\{x\in\mbR^{n+1}|F^o(x)=1\}. 
\end{align*}
A Wulff shape of radius $r$ centered at $x_0\in \mbR^{n+1}$ is given by \begin{align*}
\mcW_{r_0}(x_0)=\{x\in\mbR^{n+1}|F^o(x-x_0)=r_0\}.
\end{align*}
{We call the part of a Wulff shape truncated by a half-space {\it a Wulff cap}. }

Let $\S\subset\overline{ K}$ be an embedded, $C^2$-hypersurface with $\p\S\subset\p K$, which encloses a bounded domain $\Om$.
Let $\nu$ be the unit normal of $\S$ pointing outward $\Om$. 
The \textit{anisotropic normal} of $\S$ is given by
\eq{
\nu_F=\Phi(\nu)=\nabla F(\nu)+F(\nu)\nu,
}
and the \textit{anisotropic principal curvatures} $\{\kappa_i^F\}_{i=1}^n$ of $\S$ are given by the eigenvalues of the \textit{anisotropic Weingarten map} $$\rd\nu_F=A_F(\nu)\circ\rd\nu: T_p\S\to T_p\S.$$ The eigenvalues are real since $(A_F)$ is positive definite and symmetric.
Let $H^F=\sum_{i=1}^n\kappa_i^F$ denote the \textit{anisotropic mean curvature} of $\S$. 
It is easy to check that the anisotropic principal curvatures of $\mcW_r(x_0)$ are $\frac1r$, since 
\begin{align}\label{normal}
    \nu_F(x)=\frac{x-x_0}{r},  \quad\hbox{ on } \mcW_r(x_0).
\end{align}

In the same spirit of \cite{JWXZ23}, we introduce
\begin{definition}[Anisotropic free boundary  hypersurface]\label{Defn-free-bdry-Aniso}
\normalfont
$\S\subset\overline{ K}$ is a \textit{anisotropic free boundary  hypersurface w.r.t. $F$} in $ K$  if for every $x\in\p\S$, there holds
\eq{\label{defn-anisotropic-freebdry-1}
\langle\nu_F(x),\bar N(x)\rangle=0,
}
where $\bar N$ denotes the outer unit normal field on $\p K$. 
When $F\equiv 1$, we say $\S$ is a free boundary hypersurface.\end{definition}
Moreover, we may define the \textit{anisotropic co-normal} as (see \cite[(3.6)]{GX23}, also \cite{Rosales23})
\eq{
\mu_F
\coloneqq F(\nu)\mu-\langle\nu_F,\mu\rangle\nu,
}
where $\mu$ denotes the outer unit co-normal of $\p\S$ in $\S$. 
A direct computation yields that
\begin{proposition}
Along $\p\S$, one has
\eq{\label{eq-mu_F-nu_F-1}
\langle\mu_F,\nu_F\rangle=0,
}
and
\eq{\label{eq-mu_F-nu_F-23}
\langle\mu_F,\bar N\rangle
=&\langle\nu_F,\bar\nu\rangle,\qquad
\langle\mu_F,\bar\nu\rangle
=&-\langle\nu_F,\bar N\rangle,
}where  $\bar\nu$ is the outer unit co-normal of $\p\S$ in $\p K$.
\end{proposition}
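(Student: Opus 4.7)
The plan is to reduce the proposition to two-dimensional linear algebra on the normal plane $N_x \coloneqq (T_x\p\S)^\perp\subset\mbR^{n+1}$. For any $x\in\p\S$, both $\{\nu,\mu\}$ and $\{\bar N,\bar\nu\}$ are orthonormal bases of $N_x$: the first because $\nu\perp T_x\S$ while $\mu\in T_x\S$ is a unit co-normal to $\p\S$, the second by the analogous reasoning inside $\p K$. Let $J\colon N_x\to N_x$ be the quarter-turn with $J\nu=\mu$; it is a linear isometry with $J^2=-\mathrm{Id}$ and adjoint $J^T=-J$.

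The orthogonality $\langle\mu_F,\nu_F\rangle=0$ is a direct expansion. Since $\nabla F(\nu)$ denotes the spherical gradient, it is tangent to $\mbS^n$ at $\nu$, whence $\langle\nu,\nabla F(\nu)\rangle=0$ and $\langle\nu,\nu_F\rangle=F(\nu)$. Substituting this together with $\langle\nu,\mu\rangle=0$ into $\mu_F=F(\nu)\mu-\langle\nu_F,\mu\rangle\nu$ makes the two resulting terms cancel.

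For the remaining two identities the key step is to recognise the pair of ``$J$-relations''
\[
\mu_F=J\nu_F, \qquad \bar N=J\bar\nu
\]
inside $N_x$. The first follows at once by expanding $\nu_F=F(\nu)\nu+\langle\nu_F,\mu\rangle\mu$ in the $(\nu,\mu)$ basis and comparing with the definition of $\mu_F$. Granted both, the target identities reduce to a one-line application of $J^T=-J$:
\[
\langle\mu_F,\bar N\rangle=\langle J\nu_F,J\bar\nu\rangle=\langle\nu_F,\bar\nu\rangle,
\]
\[
\langle\mu_F,\bar\nu\rangle=\langle J\nu_F,\bar\nu\rangle=-\langle\nu_F,J\bar\nu\rangle=-\langle\nu_F,\bar N\rangle.
\]

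The main, and essentially only non-algebraic, obstacle is fixing the sign in $\bar N=J\bar\nu$, i.e.\ verifying that the ordered pairs $(\bar\nu,\bar N)$ and $(\nu,\mu)$ induce the same orientation on $N_x$. This is forced by the compatible ``outward'' conventions: $\nu$ and $\bar N$ are outward normals to the enclosed domain $\Om$ (across $\S$ and across $\p K$ respectively), while $\mu$ and $\bar\nu$ are outward co-normals to $\p\S$ (measured within $\S$ and within $\p K$ respectively). One pins the sign down in the simplest free-boundary model, a half-ball in $\mbR^{n+1}_+$, where $\nu=\bar\nu$ and $\mu=\bar N$ give $\bar N=\mu=J\nu=J\bar\nu$; the identity then persists in general because the relative orientation of two orthonormal frames is a discrete invariant.
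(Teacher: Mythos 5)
Your overall strategy coincides with the paper's: both reduce the last two identities to linear algebra in the $2$-plane $N_x=\mathrm{span}\{\nu,\mu\}=\mathrm{span}\{\bar\nu,\bar N\}$, and your relation $\bar N=J\bar\nu$ is precisely the paper's expansion \eqref{eq2.6} written as a quarter-turn (the sign you pin down via the half-ball model agrees with \eqref{eq2.6}). The proof of $\langle\mu_F,\nu_F\rangle=0$ is identical to the paper's. However, your key intermediate claim $\mu_F=J\nu_F$, justified ``by expanding $\nu_F=F(\nu)\nu+\langle\nu_F,\mu\rangle\mu$ in the $(\nu,\mu)$ basis,'' is false in general: $\nu_F=\nabla F(\nu)+F(\nu)\nu$ contains the term $\nabla F(\nu)$, which is tangent to $\mbS^n$ at $\nu$ but need not lie in $N_x$. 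The paper explicitly emphasizes this point (``in general $\nu_F$ may not belong to the $2$-plane spanned by $\nu,\mu$''), and it is exactly what distinguishes the anisotropic from the isotropic case. What is true is $\mu_F=J(P\nu_F)$, where $P$ is the orthogonal projection onto $N_x$: indeed $P\nu_F=\langle\nu_F,\nu\rangle\nu+\langle\nu_F,\mu\rangle\mu=F(\nu)\nu+\langle\nu_F,\mu\rangle\mu$, and applying $J$ reproduces the definition of $\mu_F$.

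This error does not sink the argument, because in the final computations $\nu_F$ is only ever paired against $\bar N,\bar\nu\in N_x$, so the projection can be dropped: $\langle\mu_F,\bar N\rangle=\langle JP\nu_F,J\bar\nu\rangle=\langle P\nu_F,\bar\nu\rangle=\langle\nu_F,\bar\nu\rangle$, and $\langle\mu_F,\bar\nu\rangle=\langle JP\nu_F,\bar\nu\rangle=-\langle P\nu_F,J\bar\nu\rangle=-\langle\nu_F,\bar N\rangle$. So the proof becomes correct once you insert $P$ and note $P\bar N=\bar N$, $P\bar\nu=\bar\nu$; as written, the displayed identity $\mu_F=J\nu_F$ and the claimed basis expansion of $\nu_F$ are wrong and should be corrected.
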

\begin{proof}
\eqref{eq-mu_F-nu_F-1} follows readily from the fact $\langle\nu_F,\nu\rangle=F(\nu)$ 
and the definition of $\mu_F$.
\eqref{eq-mu_F-nu_F-23} follows similarly after noticing that
\eq{\label{eq2.6}
\bar\nu
&=-\langle\nu,\bar N\rangle\mu+\langle\mu,\bar N\rangle\nu,\\
\bar N
&=\langle\nu,\bar \nu\rangle\mu-\langle\mu,\bar \nu\rangle\nu.
}
\end{proof}
\eqref{eq2.6} implies that  $\{\nu,\mu\}$ and $\{\bar\nu,\bar N\}$ span the same $2$-plane.
It follows from \eqref{eq-mu_F-nu_F-23} and \eqref{defn-anisotropic-freebdry-1} that
\eq{
\langle\mu_F,\bar\nu\rangle
=-\langle\nu_F,\bar N\rangle=0,
}
from this and the fact that $\mu_F$ is a linear combination of $\mu,\nu$, we deduce that for an anisotropic free boundary  hypersurface in $ K$, there holds: for every $x\in\p\S$,
\eq{\label{condi-aniso-conormal}
\mu_F(x)\parallel\bar N(x).
} 

Figure \ref{Fig-1} indicates an example of the anisotropic free boundary  hypersurface. In general $\nu_F$ may not belong to the $2$-plane spanned by $\nu,\mu$, which makes a big difference to the isotropic case, i.e., $F(\xi)=|\xi|$.
However, $\mu_F$ always
lies in the 2-plane by the  definition and is in fact
parallel to $\bar N$, if $\Sigma$ is an anisotropic free boundary  hypersurface. In general it is natural to introduce 

\begin{definition}
[Anisotropic capillary hypersurface]
\normalfont
Let $\Sigma$ be a hypersurface supported by $S$ as above.
We call $\Sigma$ {\it an anisotropic capillary hypersurface with contact angle $\theta$ with respect to $F$} if it satisfies
\eq{
\frac{\mu_F}{\abs{\mu_F}}
=\sin\theta \bar N +\cos\theta \bar \nu.
}
\end{definition}
If $F(\xi)=|\xi|$, 
i.e., if we are in the isotropic case, the definition is the same as the usual definition.
An anisotropic free boundary  hypersurface is an anisotropic capillary hypersurface with contact angle $\pi/2$. 


\begin{figure}[H]
	\centering
	\includegraphics[width=12cm]{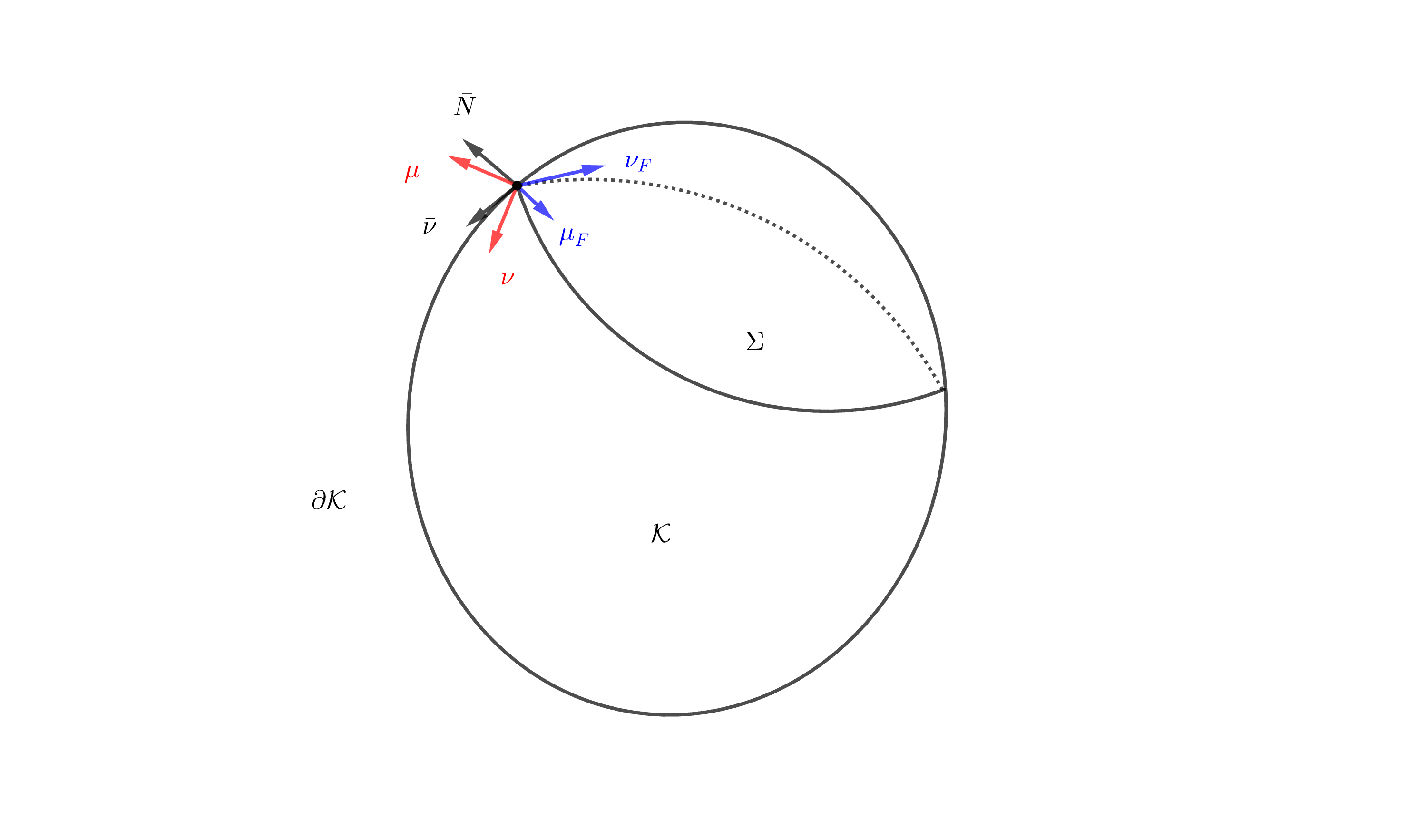}
	\caption{anisotropic free boundary  hypersurface.}
	\label{Fig-1}
\end{figure}

\section{Heintze-Karcher-Type Inequalities}\label{Sec-3}
\subsection{Proof of Theorem \ref{Thm-HK}}
In order to prove the main result, a key ingredient we shall exploit is the following  monotonicity principle in the anisotropic sense.
\begin{proposition}[{\cite[Proposition 3.3]{JWXZ23}}]\label{Prop-angle-compare}
Let $x,z\in\mbS^n$ be two distinct points and $y\in\mbS^n$ lies in a length-minimizing geodesic joining $x$ and $z$ in $\mbS^n$,
then we have \eq{
\langle\Phi(x),z\rangle\leq\langle\Phi(y),z\rangle.
}
Equality holds if and only if $x=y$.
\end{proposition}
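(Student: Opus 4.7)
The plan is to parameterize a length-minimizing geodesic joining $x$ to $z$ and show that $\phi(s)\coloneqq\langle\Phi(\gamma(s)),z\rangle$ is strictly increasing along it; the inequality and the equality case then follow at once.

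First I would take a unit-speed minimizing geodesic $\gamma\colon[0,L]\to\mbS^n$ with $\gamma(0)=x$, $\gamma(L)=z$, and $L=d_{\mbS^n}(x,z)\in(0,\pi]$, set $T(s)=\gamma'(s)$, and record the orthogonal decomposition
\[
z=\cos(L-s)\,\gamma(s)+\sin(L-s)\,T(s),
\]
so that the tangential part of $z$ at $\gamma(s)$ is $z^T=\sin(L-s)\,T(s)\in T_{\gamma(s)}\mbS^n$. The point $y$ in the statement is $\gamma(s_0)$ for some $s_0\in[0,L]$, and the claim reduces to $\phi(0)\le\phi(s_0)$ with equality iff $s_0=0$.

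Next I would compute $\phi'(s)$ directly. Extending $F$ to a $1$-homogeneous function on $\mbR^{n+1}\setminus\{0\}$, Euler's identity $\langle DF(\xi),\xi\rangle=F(\xi)$ yields $\Phi(\xi)=DF(\xi)$ for $\xi\in\mbS^n$, so
\[
\phi'(s)=\bigl\langle D^2F(\gamma(s))\,T(s),\,z\bigr\rangle.
\]
Two observations collapse this expression. First, $0$-homogeneity of $DF$ forces $D^2F(\gamma(s))\,\gamma(s)=0$, so only $z^T$ contributes to the inner product. Second, a short calculation relating the ambient Euclidean Hessian to the spherical one for a $1$-homogeneous function (using the Weingarten formula of $\mbS^n\hookrightarrow\mbR^{n+1}$ together with Euler's identity) gives $D^2F(v,w)=A_F(v,w)$ for all $v,w\in T_{\gamma(s)}\mbS^n$. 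Combining these,
\[
\phi'(s)=A_F\bigl(T(s),z^T\bigr)=\sin(L-s)\,A_F\bigl(T(s),T(s)\bigr).
\]

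By the convexity hypothesis \eqref{convexity condition}, $A_F>0$, and so $\phi'(s)>0$ on $[0,L)$. Hence $\phi$ is strictly increasing on $[0,L]$, which gives $\phi(0)<\phi(s_0)$ whenever $s_0>0$ and therefore proves $\langle\Phi(x),z\rangle\le\langle\Phi(y),z\rangle$ with equality forcing $y=x$. The one step requiring a little care is the pointwise identification $D^2F|_{T\mbS^n\times T\mbS^n}=A_F$ and the radial null direction of $D^2F$; both are routine consequences of Euler's identity, after which the result is a clean application of $A_F>0$.
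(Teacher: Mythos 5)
Your proof is correct and follows essentially the same route as the paper's: both parameterize the minimizing geodesic, differentiate $\langle\Phi(\gamma(s)),z\rangle$ along it, decompose $z$ into radial and tangential parts at $\gamma(s)$, and invoke the positivity of $A_F=\nabla^2F+F\sigma$ (the paper writes the derivative as $\sin s\,(\nabla^2F+FI)(\dot\gamma,\dot\gamma)$, which is exactly your $\sin(L-s)\,A_F(T,T)$). The only difference is presentational: you route the computation through the ambient Hessian $D^2F$ and Euler's identity, while the paper works directly with $D_{\dot\gamma}\Phi$; the content is identical.
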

\begin{proof}
For the convenience of the reader we provide its proof.

We denote  $d_0=d_{\mbS^n}(x,z)$ and $d_1=d_{\mbS^n}(x,y)$, where $d_{\mbS^n}$ denotes the intrinsic distance on $\mbS^n$.
If $y\neq x$, clearly $0<d_1\leq d_0$.
Let $\gamma:[0,d_0]\to \mbS^n$ be the arc-length parameterized geodesic with $\gamma(0)=x$, $\gamma(d_0)=z$.  Consider the following function
\eq{
f=\<\Phi(\gamma(t)),z\>, \quad t\in [0,d_0].
}
We have
\eq{\label{eq-prop}
  & \<\Phi(y),z\>-\<\Phi(x),z\>=f(d_1)-f(0)\nonumber\\ =& \int_0^{d_1} \left<\frac{\rd}{\rd t}\Phi(\gamma(t)),z\right>\rd t=\int_0^{d_1}\<D_{\dot{\gamma}(t)}\Phi(\gamma(t)), z\>\rd t,
}
where $D$ is the Euclidean covariant derivative.
Since $\gamma$ is length-minimizing, it is easy to see that  \eq{\<\dot{\gamma}(t), z\>\ge 0, \quad \forall t\in (0, d_0).
}
Thus $z$ can be expressed as $z=\sin s\dot{\gamma}(t)+\cos s\gamma(t)$ with some $s\in(0,\pi)$. It follows that
\eq{
  \<D_{\dot{\gamma}(t)}\Phi(\gamma(t)), z\>=\sin s (\nabla^2 F+F I)(\dot{\gamma}(t),\dot{\gamma}(t)),
}
Since  $(\nabla^2 F+F I)>0$,
we get $\<D_{\dot{\gamma}(t)}\Phi(\gamma(t)), z\>>0$ for any $t\in(0,d_1)$.
This fact, together with \eqref{eq-prop}, leads to the assertion.
\end{proof}


\begin{proof}[Proof of Theorem \ref{Thm-HK}]
For any $x\in\S$, let $e_i^F(x)$ be the corresponding anisotropic principal vector of $\S$ at $x$ with respect to the anisotropic principal curvature $\kappa_i^F(x)$, such that $\abs{e_1^F\wedge\ldots e_n^F}=1$.
Since $\S$ is strictly anisotropic mean convex,
\eq{
\max_i{\kappa^F_i(x)}\ge \frac1n H^F(x)>0, \hbox{ for }x\in \S.
}
Define 
\begin{align*}
    Z=\left\{(x,t)\in\S\times\mbR:0<t\leq\frac{1}{\max{\kappa^F_i(x)}}\right\},
   \end{align*} 
   and
   \begin{align}
    &\zeta_F: Z\to \mbR^{n+1},\\
    &\zeta_F(x,t)=x-t\nu_F(x).\label{parallel}
\end{align}

{\bf Claim}. $\Om\subset\zeta_F(Z)$.

Recall that $\mcW_r(x_0)$ is the Wulff shape centered at $x_0$ with radius $r$. 
For any $y\in\Om$, we consider a family of Wulff shapes $\left\{\mcW_r(y)\right\}_{r\geq0}$.
Since $y\in\Om$ is an interior point,  we definitely have $\mcW_r(y)\subset\Om$ for $r$ small enough, and hence as $r$ increases, there exists $x_y\in\S$ and $r_y>0$, such that $\mcW_{r_y}(y)$ 
touches $\S$ for the first time from the interior at  $x_y\in\S$.

{\bf Case 1}. $x_y\in\mathring{\S}$.

In this case, 
the Wulff shape $\mcW_{r_y}(y)$ is tangent to $\S$ at $x_y$, and hence
\eq{\label{eq-nu}
\nu(x_y)=\nu^{\mcW}(x_y),
} 
where $\nu^{\mcW}$  denotes the outward unit normal  of $\mcW_{r_y}(y)$.  
Let $\nu_F^\mcW(x_y)$ be the outward anisotropic normal to $\mcW_{r_y}(y)$ at $x_y$. \eqref{eq-nu} implies 
\eq{\nu_F(x_y)=\nu^\mcW_F(x_y),} which in turn together with \eqref{normal} implies
\eq{\label{case1} y=x_y-r_y \nu ^ {\mcW}_F(x_y) =x_y-r_y \nu _F(x_y)
.}
Moreover, since $\mcW_{r_y}(y)$ touches $\S$ from the interior, we easily infer 
\eq{\label{dnu-eq}
\rd\nu
\leq\rd\nu^\mcW,
} 
in the sense that
the coefficient matrix of the difference of two classical Weingarten operators $\rd\nu-\rd\nu^\mcW$ is semi-negative definite.
It follows from \eqref{eq-nu} and \eqref{dnu-eq} that  \eq{\label{an-dnu-eq}
A_F(\nu)\circ\rd\nu
\leq A_F(\nu^\mcW)\circ\rd\nu^\mcW.
}
Since the  anisotropic principal curvatures of $\mcW_{r_y}(y)$ are equal to $\frac{1}{r_y}$, we see from \eqref{an-dnu-eq} that
\begin{align*}
\max_{1\leq i\leq n}{\kappa^F_{i}(x_y)}\le\frac 1{r_y}.
\end{align*} 
Invoking the definition of $Z$ and $\zeta_F$, we find that $y\in\zeta_F(Z)$ in this case.

{\bf Case 2}. $x_y\in\p\S$.

We will rule out this case thanks to condition \eqref{condi-anisotropic-freebdry}.

Let $\nu_F^\mcW(x_y)$ be the outward anisotropic normal to $\mcW_{r_y}(y)$ at $x_y$ as above.
From \eqref{normal} we know 
\eq{\Phi(\nu^\mcW(x_y))=
\nu_F^\mcW(x_y)
=\frac{x_y-y}{r_y}.
}
Since the Wulff shape $\mcW_{r_y}(y)$ touches $\S$ from the interior, we have
\eq{\label{ineq-case2-1}
\langle\nu(x_y),\bar N(x_y)\rangle
\geq\langle\nu^\mcW(x_y),\bar N(x_y)\rangle.
}
Since $\nu$, $\nu^\mcW$ and $\bar N$ lie on the same two-plane,
we see that
$\nu$ lies actually in the geodesic joining $\nu^\mcW$ and $\bar N$ in $\mbS^n$.
 It follows from the monotonicity principle, Proposition \ref{Prop-angle-compare}
 and 
\eqref{condi-anisotropic-freebdry}
 that 
\eq{
0\geq
\langle\Phi(\nu(x_y)),\bar N(x_y)\rangle
\geq\langle\Phi(\nu^\mcW(x_y)),\bar N(x_y)\rangle=\frac{1}{r_y}\langle x_y-y,\bar N(x_y)\rangle.
}
On the other hand, since $ K$ is convex and $\p\S\subset{\rm Reg}(\p K)$, it follows that $ K$ lies completely on one side of the tangent space $T_{x_y}\p K$. Moreover, since $y\in\Om\subset K$, we infer easily
\eq{
\langle x_y-y,\bar N(x_y)\rangle>0,
}
which contradicts \eqref{ineq-case2-1},
in other words, it can not be that $x_y\in\p\S$.
The {\bf Claim} is thus proved.

By a simple computation, we find
\eq{
\p_t \zeta_F(x, t)&=-\nu_F(x), \\ 
D_{e^F_i} \zeta_F(x, t)&=\left(1-t\kappa^F_i(x)\right)e^F_i(x).
}
A classical computation yields that the tangential Jacobian of $\zeta_F$ along $Z$ at $(x,t)$ is just
$${\rm J}^Z\zeta_F(x,t)
=F(\nu)\prod_{i=1}^n(1-t\kappa^F_i).$$
By virtue of the fact that $\Om\subset\zeta_F(Z)$, the area formula yields
\eq{
\abs{\Om}
\leq\abs{\zeta_F(Z)}
\leq&\int_{\zeta_F(Z)}\mcH^0(\zeta_F^{-1}(y))\rd y
=\int_Z {\rm J}^Z\zeta_F\rd\mcH^{n+1}\\
=&\int_\Sigma\rd A\int_0^{\frac{1}{\max\left\{\kappa^F_i(x)\right\}}}F(\nu)\prod_{i=1}^n(1-t\kappa^F_i(x))\rd t.
}
By the AM-GM inequality,
and the fact that $\max\left\{\kappa^F_i(x)\right\}_{i=1}^n\geq \frac1n H^F(x)$, we obtain
\eq{
\abs{\Om}
\leq&\int_\S\rd A\int_0^{\frac{1}{\max\left\{\kappa^F_i(x)\right\}}}F(\nu)\left(\frac{1}{n}\sum_{i=1}^n\left(1-t\kappa^F_i(x)\right)\right)^n \rd t\notag\\
\leq&\int_\S F(\nu)\rd A\int_0^{\frac{n}{H^F(x)}} \left(1-t\frac{H^F(x)}{n}\right)^n\rd t\notag\\
=&\frac{n}{n+1}\int_\S \frac{F(\nu)}{H^F}\rd A,
}
which gives \eqref{ineq-HK}.

If equality in \eqref{ineq-HK} holds, it is then standard to show that $\S$ must be a part of a Wulff shape.
Since $\S$ is a part of a Wulff shape, we know that the flow $\zeta_F$ indeed maps every $x\in\S$ and $t=\frac{H^F}{n}$ to the center of this Wulff shape, which implies that $\zeta_F(Z)$ is a Wulff sector and hence naturally induces a  cone. 
Moreover, we must have
\eq{
\abs{\Om}=\abs{\zeta_F(Z)},
}
this in turn shows that at least a part of the convex body $ K$ has to coincide exactly with the cone that is determined by $\S$. thus the cone is convex and we infer from $\p\S\subset{\rm Reg}( K)$ that the cone is nowhere singular other than its vertex. 

\end{proof}

\subsection{Proof of Theorem \ref{Thm-HK-Divisor}}

In the case that $ K$ is a wedge, Theorem \ref{Thm-HK-Divisor} is proved in \cite[Theorem 1.5]{JWXZ22}.
Now we  modify that proof to show Theorem \ref{Thm-HK-Divisor}.
\begin{proof}[Proof of Theorem \ref{Thm-HK-Divisor}]
We begin with the notification that similar as \cite[Proof of Theorem 1.5]{JWXZ22}, we shall follow the proof of Theorem \ref{Thm-HK} with a careful investigation on the case when the first touching point of $S_{r_y}(y)$ with $\S$ occurs at $x_y\in\p\S\cap{\rm Sing}_0( K)$ for some $r_y>0$, since if $x_y\in\p\S\cap{\rm Reg}( K)$ is the case, then the proof is easily completed by letting $F(\xi)=\abs{\xi}$ in Theorem \ref{Thm-HK}.
Throughout the proof, $S_{r_y}(y)$ denotes the sphere with radius $r_y$, centered at $y$ and $B_{r_y}(y)$ denotes correspondingly the open ball.

For simplicity we omit the dependence of $x_y,r_y$ on $y$.
Let $\nu_{B_{r}}$ denote the outward unit normal of $B_r(y)$ at $x\in S_r(y)$ and  $T_x\S$  the tangent space of $\S$ at $x$.
Up to relabelling, we may assume that $x\in{\rm int}(F_1\cap F_2)$.
Note that the blow-up limit of $\p K$ at $x$ is given by two non-opposite $n$-dimensional half-planes $P_1,P_2$, with outer unit normals denoted by $\bar N_1,\bar N_2$, respectively.
The intersection $P_1\cap P_2$ is then an $(n-1)$-dimensional plane.
Moreover, thanks to \cite[Remark 4.1]{JWXZ22}, we may reduce the problem to the $3$-dimensional case, that is, $P_1$, $P_2$ are $2$-dimensional half-planes in $\mbR^3$.

For the case when $\bar N_i\parallel\nu_{B_r}$ for some $i$, and the case when $P_1\cap P_2\subset T_x\S$, we could follow \cite[Proof of Theorem 1.5, Cases 1, 2.1]{JWXZ22} to conclude that $S_r(y)\cap P_i$ is tangent to $\p\S\cap P_i$ at $x$ for some $i$, and hence recovers {\bf Case 2} in the proof of Theorem \ref{Thm-HK}, from which we exclude the possibility of the occurrence of first touching at any such points.

It remains to consider the case when $P_1\cap P_2\nsubset T_x\S$,
which is inspired from the wedge case handled in \cite[Proof of Theorem 1.5, Case 2.2]{JWXZ22}.

Since $P_1\cap P_2\nsubset T_x\S$, it is then essential that $\p\S\cap\p F_i$ is a $1$-dimensional curve locally near $x$.
For simplicity of notations we adopt
\eq{
\p\S^i
=\p\S\cap F_i,\quad
\p B_r^i
=S_r(y)\cap F_i,\quad i=1,2.
}
Let $T_{\p\S^i}$ be the unit tangent vector of $\p\S^i$ at $x$ such that $\langle T_{\p\S^i},\bar N_j\rangle$>0 for $i\neq j$ and $T_{\p B_r^i}$ be the unit tangent vector of $\p B_r^i$ at $x$ such that $\langle T_{\p B_r^i},\bar N_j\rangle>0$ for $i\neq j$.
Since $\nu$ and $\bar N_i$ are perpendicular to $T_{\p\S^i}$, we know that $T_{\p\S^i}$ is parallel to $\nu\wedge\bar N_i$.
From the same reason we deduce that $T_{\p B_r^i}$ is parallel to $\nu_{B_r^i}\wedge\bar N_i$.
Let $l$ denote the unit vector in $P_1\cap P_2$, pointing outwards with respect to $\Om$ at $x$.

At this point we want to exclude the possibility of $P_1\cap P_2\nsubset T_x\S$ by exploiting the fact that $x$ is the first touching point.
More precisely, we may show that
\eq{\label{ineq-JWXZ22-(37)}
\langle T_{\p B_r^i},l\rangle
\geq\langle T_{\p\S^i},l\rangle
}
thanks to the first touching, as done in \cite[(37)]{JWXZ22}.

On the other hand, for $i=1,2$,
let $\eta^i\in(0,\pi)$ be such that $\langle\nu_{B_r},\bar N_i\rangle=-\cos\eta^i$ and $\theta^i\in(0,\pi)$ be such that $\langle\nu,\bar N_i\rangle=-\cos\theta^i$.
It then follows from \eqref{condi-freebdry-Divisor} that
\eq{
0<\theta^i\leq\frac{\pi}{2},\text{ for }i=1,2.
}
For an easier understood clarification, let us currently work in the free boundary situation, that is, $\theta^1=\theta^2=\frac{\pi}{2}$.

Notice that $x\in P_i$ and $y\in{\rm int}( K)$, from the convexity of $ K$ we conclude that
\eq{
-\cos\eta^i
=\langle\nu_{B_r},\bar N_i\rangle
=\frac{1}{r}\langle x-y,\bar N_i\rangle
>0,
}
and hence
\eq{
\eta^i>\frac{\pi}{2}=\theta^i,\text{ for }i=1,2,
}
which leads to a contradiction since \eqref{ineq-JWXZ22-(37)} is equivalent to (see \cite[(38),(39)]{JWXZ22})
\eq{
\cos\eta^2+\cos\eta^1\cos\alpha
\geq0,\\
\cos\eta^1+\cos\eta^2\cos\alpha
\geq0,
}
where $\alpha\in(0,\pi)$ is the dihedral angle of $P_1,P_2$.

The general situation when $0<\theta^i\leq\frac{\pi}{2}$ follows similarly from \cite[Case 2.2]{JWXZ22}, with $\mathbf{k}_0=0$ therein.
Here we point out that in \cite{JWXZ22}, shifting in the direction of the constant vector $\mathbf{k}_0$ preserves the capillarity of the parallel surfaces.
In the free boundary situation, there is no shifting needed.

We have shown that the first touching only occurs at some $x\in\S\setminus\p\S$, the rest of the proof then follows directly from that of Theorem \ref{Thm-HK}.
\end{proof}
\section{Alexandrov-Type Theorems}\label{Sec-4}
In this section we prove Theorems \ref{Thm-Alex} and \ref{Thm-Alex-Divisor}.

\begin{proposition}\label{Prop-Minko}
Let $\mcC$ be a cone with vertex at the origin and  $\S\subset\overline{\mcC}$ be a compact, embedded, anisotropic free boundary $C^2$-hypersurface in $\mcC$ with $\p\S\subset{\rm Reg}(\p\mcC)$.
Then the following Minkowski-type formula holds:
\eq{\label{formu-Minko}
\int_\S nF(\nu)-H^F(x)\langle x,\nu\rangle\rd A=0.
}
The conclusion holds true if
$\mcC\in\mcK_P$ is of polytope-type and $\p\S\subset{\rm Reg}(\p\mcC)\cup{\rm Sing}_0(\p\mcC)$.
\end{proposition}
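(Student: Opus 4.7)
The plan is to carry out an anisotropic Hsiung--Minkowski argument on $\Sigma$, where the boundary term produced by Stokes' theorem is forced to vanish by combining the anisotropic free boundary condition with the defining feature of the cone, namely that the position vector is tangent to $\partial\mcC$ at every smooth point (since $\mcC$ is invariant under dilation from the vertex at the origin).

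\smallskip
First I would introduce the tangential vector field
\eq{
V \coloneqq F(\nu)\, x - \langle x,\nu\rangle\, \nu_F
}
on $\Sigma$. A direct check shows $\langle V,\nu\rangle = F(\nu)\langle x,\nu\rangle - \langle x,\nu\rangle\langle \nu_F,\nu\rangle = 0$, using $\langle \nu_F,\nu\rangle = F(\nu)$, so $V$ is tangent to $\Sigma$. Equivalently one may write $V = F(\nu) x^T - \langle x,\nu\rangle \nabla F(\nu)$. A standard computation in an orthonormal frame $\{e_i\}$ on $\Sigma$, using $D_{e_i} x = e_i$, the identity $\sum_i\langle D_{e_i}\nu_F,e_i\rangle = \mathrm{tr}(d\nu_F) = H^F$, and the cancellation between $\sum_i(e_iF(\nu))\langle x,e_i\rangle$ and $\sum_i(e_i\langle x,\nu\rangle)\langle \nu_F,e_i\rangle$ (which both equal $\langle x,\,A_F(\nu)\, d\nu(e_i)\rangle$-type tensorial terms that pair off via the symmetry of $A_F\circ d\nu$), will yield the anisotropic Hsiung--Minkowski identity
\eq{
\mathrm{div}_\Sigma V = nF(\nu) - H^F\,\langle x,\nu\rangle.
}
This is the well-known anisotropic analogue of the classical formula $\mathrm{div}_\Sigma x^T = n - H\langle x,\nu\rangle$, going back to He--Li--Ma--Ge \cite{HLMG09}.

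\smallskip
Next I would apply Stokes' theorem to get
\eq{
\int_\Sigma \left[nF(\nu) - H^F\langle x,\nu\rangle\right]\rd A = \int_{\p\Sigma}\langle V,\mu\rangle\, \rd s,
}
and rewrite the integrand on the right in terms of the anisotropic co-normal $\mu_F = F(\nu)\mu - \langle \nu_F,\mu\rangle\nu$. Expanding,
\eq{
\langle V,\mu\rangle = F(\nu)\langle x,\mu\rangle - \langle x,\nu\rangle\langle\nu_F,\mu\rangle = \langle x,\mu_F\rangle,
}
where I used that $\langle \nu,\mu\rangle=0$ and $\langle \nu_F,\mu\rangle=\langle \nabla F(\nu),\mu\rangle$. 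So the formula reduces to proving that $\int_{\p\Sigma}\langle x,\mu_F\rangle\,\rd s = 0$.

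\smallskip
To finish, on points $x\in\p\Sigma\cap\mathrm{Reg}(\p\mcC)$ I use the anisotropic free boundary condition \eqref{condi-aniso-conormal}, which gives $\mu_F(x)\parallel \bar N(x)$. Since $\mcC$ is a cone with vertex at the origin, the ray $\{tx:t\geq 0\}$ lies in $\p\mcC$, so $x$ is tangent to $\p\mcC$ at $x$, i.e., $\langle x,\bar N(x)\rangle = 0$. Hence $\langle x,\mu_F\rangle \equiv 0$ on $\p\Sigma\cap\mathrm{Reg}(\p\mcC)$. For the polytope-type extension, the same reasoning applies at every point of $\p\Sigma\cap\mathrm{Reg}(\p\mcC)$, while $\p\Sigma\cap\mathrm{Sing}_0(\p\mcC)$ is the transverse intersection of an $(n-1)$-dimensional curve with a codimension-one stratum of $\p\mcC$, hence $\mcH^{n-1}$-negligible as a subset of $\p\Sigma$, so it contributes nothing to the boundary integral. (Even pointwise, at an edge point $x\in f_{ij}^0$ the position vector is tangent to both facets $F_i,F_j$ because both are sub-cones from the origin, so $\langle x,\bar N_{ij}^\alpha\rangle=0$ as well.) This yields \eqref{formu-Minko}.

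\smallskip
The main obstacle I anticipate is verifying the divergence identity cleanly, because the mixed terms involving $dF(\nu)$ and $d\nu$ need to be handled using the symmetry of $A_F\circ d\nu$; once this is established the rest is a short bookkeeping argument that exploits the two pieces of structure (free boundary + cone through the origin) in precisely the way they were engineered to fit together.
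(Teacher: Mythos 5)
Your proposal is correct and follows essentially the same route as the paper: the same tangential field $X_F=F(\nu)x-\langle x,\nu\rangle\nu_F$, the same divergence identity, and the same reduction of the boundary term to $\int_{\p\S}\langle\mu_F,x\rangle$, killed by $\mu_F\parallel\bar N$ together with $\langle x,\bar N(x)\rangle=0$ on the cone. Your extra remarks on the $\mcH^{n-1}$-negligibility of $\p\S\cap{\rm Sing}_0(\p\mcC)$ fill in a detail the paper leaves implicit, but the argument is the same.
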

\begin{proof}
We define a $C^1$-vector field on $\S$:
\eq{
X_F(x)\coloneqq
F(\nu(x))x-\langle x,\nu(x)\rangle\nu_F(x).
}
As observed in \cite{JWXZ23}, $X_F$ is in fact a tangential vector field on $\S$ with
\eq{
{\rm div}_\S(X_F)
=nF(\nu)-H^F\langle x,\nu\rangle,
}
where ${\rm div}_\S$ is the divergence operator on the hypersurface $\S$.
Integrating this over $\S$ and using the divergence theorem, 
we obtain
\eq{
\int_\S nF(\nu)-H^F(x)\langle x,\nu\rangle\rd A
=&\int_{\p\S}F(\nu)\langle x,\mu\rangle-\langle x,\nu\rangle\langle\nu_F,\mu\rangle\rd\mcH^{n-1}\\
=&\int_{\p\S}\langle F(\nu)\mu-\langle\nu_F,\mu\rangle\nu,x\rangle\rd \mcH^{n-1}\\
=&\int_{\p\S}\langle\mu_F,x\rangle\rd \mcH^{n-1}=0,
}
where we have used \eqref{condi-aniso-conormal} in the last equality. This proves \eqref{formu-Minko}.
\end{proof}

\begin{proposition}\label{Prop-elliptic-point}
Let $\mcC$ be a cone with vertex at the origin and  $\S\subset\overline{\mcC}$ be a compact, embedded, anisotropic free boundary $C^2$-hypersurface in $\mcC$ with $\p\S\subset{\rm Reg}(\p\mcC)$.
Then $\S$ has at least one elliptic point, i.e., a point where all the anisotropic principal curvatures are positive.
The conclusion holds true if
$\mcC\in\mcK_P$ is of polytope-type and $\p\S\subset{\rm Reg}(\p\mcC)\cup{\rm Sing}_0(\p\mcC)$.
\end{proposition}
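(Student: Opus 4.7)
The plan is to adapt the Wulff-shape touching argument from the proof of Theorem~\ref{Thm-HK} by centering the Wulff shape at the vertex of the cone and shrinking it from outside, which yields a lower (rather than upper) bound on the anisotropic principal curvatures. I would set $r_0 := \max_{\bar\Omega} F^o$ and let $x_0 \in \bar\Omega$ realize this maximum. Since $\mcC$ is a cone with vertex at the origin, $F^o$ is strictly increasing along every ray from the origin, and since $T := \p\mcC \cap \bar\Omega$ is ruled by such rays, the maximum of $F^o$ on $T$ is attained on $\p T = \p\Sigma \subset \Sigma$. Hence $x_0 \in \Sigma$, and the Wulff ball $\bar B^{F^o}_{r_0}(0)$ contains $\bar\Omega$ and touches $\Sigma$ at $x_0$.

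If $x_0 \in \Sigma \setminus \p\Sigma$, then $\Sigma$ is locally contained in $\bar B^{F^o}_{r_0}(0)$ and tangent to $\mcW_{r_0}(0)$ at $x_0$, so that $\nu(x_0) = \nu^\mcW(x_0)$. As in equations \eqref{dnu-eq}--\eqref{an-dnu-eq}, but with the reverse inequality since $\Sigma$ is now on the \emph{inside} of the Wulff shape, the anisotropic second-fundamental-form comparison yields $\kappa_i^F(x_0) \ge 1/r_0 > 0$ for every $i$, and hence $x_0$ is an elliptic point.

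The main obstacle is the boundary case $x_0 \in \p\Sigma$. At such a boundary maximum, the Lagrange-multiplier condition on the smooth function $F^o|_\Sigma$ forces $\nabla_\Sigma F^o(x_0) = \lambda\,\mu(x_0)$ with $\lambda \ge 0$, which in turn implies $\nu^\mcW(x_0) \in V := \mathrm{span}(\nu(x_0), \mu(x_0))$. At the same time, the cone property gives $\Phi(\nu^\mcW(x_0)) = x_0/r_0 \perp \bar N(x_0)$, while the anisotropic free boundary condition gives $\Phi(\nu(x_0)) \perp \bar N(x_0)$. I would then combine these with the strict convexity of $\tfrac12 F^2$ (condition \eqref{convexity condition}) and the bijectivity of $\Phi\colon \mathbb{S}^n \to \mcW_F$ to conclude $\nu(x_0) = \nu^\mcW(x_0)$, reducing to the interior case. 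In the degenerate scenario $F^o \equiv r_0$ on $\Sigma$, the surface $\Sigma$ is itself a Wulff cap, and every point is elliptic.

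For the polytope-type statement, where $\p\Sigma$ may meet $\mathrm{Sing}_0(\p\mcC)$, I would blow up the cone at such a boundary point to a wedge as in the proof of Theorem~\ref{Thm-HK-Divisor} and carry out the analogous analysis separately with the two outward normals $\bar N_{ij}^1, \bar N_{ij}^2$. The hardest step will be the extraction of tangency $\nu(x_0) = \nu^\mcW(x_0)$ in the anisotropic setting: in the isotropic case it follows immediately from $V \cap \bar N^\perp = \mathrm{span}(\bar\nu(x_0))$ together with the outward orientation inherited from $\Omega$, but in the anisotropic case it requires a careful exploitation of the strict convexity of $\tfrac12F^2$ to rule out non-tangent touchings at the boundary.
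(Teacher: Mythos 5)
Your proposal is correct and follows essentially the same route as the paper: shrink the Wulff shapes $\mcW_r(O)$ centered at the vertex until the first (outer) touching of $\S$ at radius $r_0=\max_{\overline\Omega}F^o$, observe that at a boundary touching point the cone structure gives $\langle \nu_F^{\mcW}(x_0),\bar N(x_0)\rangle=\langle x_0/r_0,\bar N(x_0)\rangle=0=\langle\nu_F(x_0),\bar N(x_0)\rangle$, upgrade this to tangency $\nu(x_0)=\nu^{\mcW}(x_0)$ via the strict convexity of $F$ (this is exactly the monotonicity principle of Proposition \ref{Prop-angle-compare}, applied to $\nu^{\mcW}$ on the geodesic arc from $\nu$ to $\bar N$), and conclude $\kappa_i^F(x_0)\ge 1/r_0$ by the second-fundamental-form comparison; the singular polytope case is likewise deferred to the wedge analysis in both treatments. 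The only cosmetic difference is your Lagrange-multiplier phrasing of the boundary tangency step, which the paper states more tersely but with the same ingredients.
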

\begin{proof}
Consider the family of  Wulff shapes $\mcW_r(O)$, where $O$ is the origin. 
Since $\S$ is compact, for $r$ large enough, $\S$ lies inside the domain bounded by the Wulff shape $\mcW_r(O)$.
Hence we may find the smallest $r$, say $r_0>0$, such that  $\mcW_{r_0}(O)$ touches $\S$ for the first time at some $x_0\in\S$ from exterior.

If $x_0\in\mathring{\S}$, then $\S$ and $\mcW_{r_0}(O)$ are tangent at $x_0$.
If $x_0\in\p\S\cap{\rm Reg}(\p\mcC)$,
observe that there holds
\eq{\label{eq-nuW-E}
\langle\nu_F^\mcW(x_0),\bar N(x_0)\rangle
=\langle\frac{x_0}{r},\bar N(x_0)\rangle
=0
=\langle\nu_F(x_0),\bar N(x_0)\rangle,
}
thanks to the fact that $\S$ is an anisotropic free boundary  hypersurface, which also shows that $\S$ and $\mcW_{r_0}(O)$ are mutually tangent at $x_0$.
In both cases, we may use a similar argument as in the proof of Theorem \ref{Thm-HK} to conclude that the anisotropic principal curvatures of $\S$ at $x_0$ are larger than or equal to $\frac{1}{r_0}$.

It suffice to consider the case when $\mcC=\Psi(P)$ and $x_0\in\p\S\cap{\rm Sing}_0(\p\mcC)$ in the isotropic case, which follows from a similar argument as \cite[Proposition 5.3]{JWXZ22}.
\end{proof}
\begin{proof}[Proof of Theorem \ref{Thm-Alex}]
From Proposition \ref{Prop-elliptic-point} we know that $H^F$ is a positive constant, and hence the Heintze-Karcher-type inequality \eqref{ineq-HK} holds.

On the other hand, since $H^F$ is a positive constant, we may rearrange the Minkowski-type formula \eqref{formu-Minko} to see that
\eq{
\int_\S\frac{F(\nu)}{H^F}\rd A
=\frac{1}{n}\int_\S\langle x,\nu\rangle\rd A
=\frac{n+1}{n}\abs{\Om},
}
where the last equality holds since $\mcC$ is a cone with vertex at the origin.
This means that the equality case of \eqref{ineq-HK} happens, and hence $\S$ must be an anisotropic free boundary  Wulff cap, 
and $\p\mcC\cap\overline{\Omega}$ is a part of boundary of
some convex cone $\tilde{\mcC}$. 

If $\tilde\mcC=\mcC$, it follows easily that $\S$ is a part of a Wulff shape centered at the origin $O$, with $\p\S\subset{\rm Reg}(\p\mcC)$, which implies that  $\mcC$ is a smooth cone.
If $\tilde\mcC\neq \mcC$, 
let $\tilde{O}$ be the vertex of $\tilde{\mcC}$, we assert that $\tilde O\neq O$, otherwise the cone $\mcC$ can be  determined by $\Sigma$, which leads to a contradiction since the cone $\tilde\mcC$ is also determined by $\Sigma$. From Theorem \ref{Thm-HK}, we have  $\tilde{O}\in\p\mcC\cap\overline{\Omega}$, 
thus it is easy to see that the ray $l=\{t\overrightarrow{O\tilde{O}}:t>0\}$ lies on the boundary of $\mcC$. 
Theorem \ref{Thm-HK} tells us that in the equality case $\tilde O$ is actually an interior point of $\p\mcC\cap\overline{\Omega}$, combining with the fact that $\tilde O\in l\subset \partial \mcC$, we deduce: there exist a constant $\epsilon>0$ such that $(1-\epsilon)\tilde O\in \p\mcC\cap\overline{\Omega}\subset\p\tilde\mcC$ and $(1+\epsilon)\tilde O\in\p\mcC\cap\overline{\Omega}\subset\p\tilde\mcC$; that is to say, $\p\tilde\mcC$ contains a line.  Since $\p\Sigma$ is the boundary of $\p\mcC\cap\overline{\Omega}\subset\p\tilde\mcC$, and $\p\S\subset{\rm Reg}(\p\mcC)$, we see that $\tilde \mcC$ must be a half space, thus there exists a flat portion on $\p\mcC$, and 
$\p\S$ lies on a flat portion of $\p\mcC$.




\end{proof}

\begin{proof}[Proof of Theorem \ref{Thm-Alex-Divisor}]
We first  mention that, thanks to the free boundary condition, for any $x\in\S\cap{\rm Sing}_0(\p\mcC)$, say $x\in{\rm int}(F_i\cap F_j)$, we have $\nu(x)\in T_x(F_i\cap F_j)$. In other words, $\S$ must intersect the edges of $\p\mcC$ transversally, and hence Proposition \ref{Prop-Minko} is applicable.

The proof follows similarly from that of Theorem \ref{Thm-Alex}, for the sake of brevity, here we 
just list the significant changes.
For the case that $\tilde \mcC\neq\mcC$, we can also prove that the cone $\tilde\mcC$ contain a line, combing with the fact that $\tilde \mcC$ is determined by $\Sigma$ and $\p\S\subset{\rm Reg}(\p\mcC)\cup{\rm Sing}_0(\p\mcC)$, we deduce that $\tilde\mcC$ is either  a half space or a wedge. Therefore, for the case that $\tilde \mcC\neq\mcC$, $\p\S$ lies on a flat portion or a wedge portion of $\p\mcC$.

\end{proof}
\bibliographystyle{alpha}
\bibliography{BibTemplate.bib}

\end{document}